\newcommand{\CC}{{\mathbb C}}
\newcommand{\PP}{{\mathbb P}}
\newcommand{\GG}{{\mathbb G}}
\newcommand{\KK}{{\mathbb K}}
\newcommand{\RR}{{\mathbb R}}
\newcommand{\Sing}{{\rm{Sing}}}
\newcommand{\rk}{{\rm{rk}}}
\DeclareMathOperator{\CH}{CH}
\DeclareMathOperator{\Gr}{\mathbb{G}}
\DeclareMathOperator{\codim}{codim}
\newtheorem{theorem}{Theorem}[section]
\newtheorem{lemma}[theorem]{Lemma}
\newtheorem{proposition}[theorem]{Proposition}
\newtheorem{corollary}[theorem]{Corollary}
\newtheorem{conjecture}[theorem]{Conjecture}
\theoremstyle{definition}
\newtheorem{definition}[theorem]{Definition}
\newtheorem{example}[theorem]{Example}
\theoremstyle{remark}
\newtheorem{remark}[theorem]{Remark}
\numberwithin{equation}{section}
\title[Algebraic boundaries among typical ranks]{Algebraic boundaries among typical ranks for real binary forms of arbitrary degree}
\author[M. C. Brambilla]{Maria Chiara Brambilla}
\email{{\tt brambilla@dipmat.univpm.it}}
\address{Universit\`a Politecnica delle Marche,   via Brecce Bianche, I-60131 Ancona, Italy}
\author[G. Staglian\`o]{Giovanni Staglian\`o}
\email{{\tt giovannistagliano@gmail.com}}
\address{Universit\`a degli Studi di Catania, Viale A. Doria 5, 95125 Catania, Italy}
\thanks{The first named author is partially supported by MIUR and INDAM}
\keywords{Typical rank, Real rank boundary, Algebraic boundary, Binary form, Multiple root locus, Coincident root locus, Waring problem}
\subjclass[2010]{Primary: 15A69. Secondary: 14P10, 14N05}
\begin{document}

\begin{abstract}
We show that the algebraic boundaries of the regions of real binary forms with fixed typical rank are always unions of dual varieties 
to suitable coincident root loci.
\end{abstract}

\maketitle

\section*{Introduction}

Let $f\in R_d=\KK[x,y]_d$ be a binary form of degree $d$, {where $\KK=\RR$ or $\CC$.}
By definition, see e.g.\ \cite{Landsberg}, the \emph{$\KK$-rank} of $f$ is
the minimum integer $r$ such that $f$ admits a decomposition $f=\sum_{i=1}^r \alpha_i(\ell_i)^d$, 
where $\alpha_i\in \KK$ and $\ell_i\in {{\KK}[x,y]}_1$ for $i=1,\ldots,r$. 

The $\CC$-rank of a form, also called {\it complex Waring rank}, has been widely studied by many authors. The case of binary forms was considered and completely solved by Sylvester \cite{Sylv1851}, who proved that the \emph{generic rank}, \emph{i.e.}, 
the complex rank of a general complex binary form of degree $d$,
is $\lceil\frac {d+1}2\rceil$ (see also \cite{Comas-Seiguer}).   
The generic complex rank of forms in more variables is described by the celebrated Alexander-Hirschowitz theorem \cite{AH95} (see also \cite{AH-BO}). 

On the other hand, the {\it real} Waring rank has been studied only in recent years and most of the questions are still open. 
Clearly the real case is particularly relevant for the applications.  
In fact, the notion of tensor rank, which generalizes the Waring rank, has recently attracted great interest 
 in applied mathematics, 
 chemometrics,
 complexity theory, 
 signal processing,
 quantum information theory, machine  learning, and  other current fields of research; see e.g.\ \cite{Kolda2009,Landsberg,Hackbusch2012,Smilde2004,Comon2008,LanOtt,COMON199693,Qi2007}.

When we work on the real field, the notion of generic rank is replaced by the notion of \emph{ typical ranks}. 
A rank is called typical for real binary forms of degree $d$ if {it occurs in an open subset of $R_d$},
with respect to the Euclidean topology.
More precisely, denoting by 
$\mathcal{R}_{d,r} 
$ the interior of 
the semi-algebraic set 
$\{f\in R_d: \rk_{\mathbb{R}}(f)=r\}$ in the real vector space $R_d$, then
a 
rank $r$ is typical exactly when $\mathcal{R}_{d,r}$ is not empty. 
By \cite{Blekherman} it is known  that
a rank $r$ is typical for forms of degree $d$  if and only if $\frac {d+1}2\le r \le d$.

Let us now assume  $\frac {d+1}2 \le r \le d$. 
Following \cite{LeeSturmfels} and \cite{BS18}, we define the \emph{topological boundary} 
$\partial(\mathcal{R}_{d,r})$ as the set-theoretic difference 
of the closure of $\mathcal{R}_{d,r}$
and the interior of the closure of $\mathcal{R}_{d,r}$. Thus, if $f\in\partial(\mathcal{R}_{d,r})$
then every neighborhood of $f$ contains a generic form of real rank equal to $r$ 
and also a generic form of real rank different from $r$. 
We have that
 $\partial(\mathcal{R}_{d,r})$
 is a semi-algebraic subset of $R_d$ of pure codimension one. 
We define the \emph{algebraic boundary} $\partial_{\textrm{alg}}(\mathcal{R}_{d,r})$, also called \emph{real rank boundary}, 
as the Zariski closure 
of the topological boundary  $\partial(\mathcal{R}_{d,r})$
in the complex projective space $\PP(\CC[x,y]_d)$.

The algebraic boundary for maximum rank $r=d$
coincides with the discriminant hypersurface.
Indeed
by \cite{ComonOttaviani} and \cite{CausaRe}, we know that 
the open set
$\mathcal R_{d,d}$ corresponds to the locus of {\it real-rooted} forms, that is forms with all distinct and real roots.
In the opposite case, the algebraic boundary for minimum rank $\overline{r}=\lceil\frac {d+1}2\rceil$
has been described in \cite{LeeSturmfels}. It is irreducible when $d$ is odd, and 
it has two irreducible components when $d$ is even.
From these general results, it follows a complete description of all the algebraic boundaries with low degree $d\le 6$. 

In \cite{BS18}, we completely described the algebraic boundaries for the next two cases, $d=7$ and $d=8$. More precisely, we show in \cite{BS18} that all the boundaries between two typical ranks are unions of dual varieties to suitable {\it coincident root loci}. 
Coincident root loci are well-studied varieties which parametrize 
binary forms with multiple roots, see Section \ref{secCRL} for the precise definitions. 

In this paper we study the algebraic boundaries for forms of arbitrary degree, and our main result is the following:

\begin{theorem}[Theorems~\ref{teoremaOdd} and \ref{casopari}]\label{teoremaIntroduzione}
For any degree $d$ and any typical rank $\frac{d+1}2\le  r \le d$, the algebraic boundary $\partial_{\textrm{alg}}(\mathcal{R}_{d,r})$ is a union of dual
{varieties to}
coincident root loci.
\end{theorem}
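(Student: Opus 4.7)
The plan is to establish the equality
\[
\partial_{\mathrm{alg}}(\mathcal{R}_{d,r}) \;=\; \bigcup_{\lambda} \overline{\Delta_\lambda^\vee}
\]
by a careful analysis of how real Waring decompositions of length $r$ degenerate as one approaches the topological boundary. Building on the techniques of \cite{BS18} used for $d=7,8$, and of \cite{LeeSturmfels} for the minimum typical rank, I would treat the odd-degree and even-degree cases separately (corresponding to Theorems~\ref{teoremaOdd} and \ref{casopari}), since the pattern of irreducible components and the parametrization of generic decompositions differ with the parity of $d$.

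For the first inclusion $\partial_{\mathrm{alg}}(\mathcal{R}_{d,r})\subseteq \bigcup_\lambda\overline{\Delta_\lambda^\vee}$, the key tool is Sylvester-type apolarity: a binary form $f\in R_d$ has real rank at most $r$ precisely when its apolar ideal contains a polynomial with $r$ distinct real roots. Given $f\in\partial(\mathcal{R}_{d,r})$ and a sequence $f_t\to f$ with $\rk_{\RR}(f_t)=r$, the corresponding apolar configurations of $r$ real points on $\PP^1$ must degenerate, either by two real points colliding into a double point, or by a pair of real points merging and then leaving the real locus as a complex conjugate pair. In both cases the limiting form $f$ acquires a tangency with a coincident root locus $\Delta_\lambda$ for an appropriate partition $\lambda\vdash d$, and one obtains $f\in\Delta_\lambda^\vee$. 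This argument exhibits every irreducible component of $\partial_{\mathrm{alg}}(\mathcal{R}_{d,r})$ as contained in such a dual.

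The reverse inclusion, and the identification of the precise list of partitions $\lambda$ that occur, is the main content and the principal obstacle. For each candidate $\lambda$ one must certify that a general point of $\Delta_\lambda^\vee$ actually separates two distinct typical ranks, by exhibiting explicit real perturbations that realize real rank $r$ on one side of $\Delta_\lambda^\vee$ and a different typical value on the other. This requires a combinatorial classification of which partitions separate which pairs of typical ranks, depending delicately on the parities of $d$ and $r$ and on the shape of $\lambda$. I would expect to carry out this analysis by tracking how the signs of the coefficients $\alpha_i$ in a real decomposition $f=\sum_{i=1}^r \alpha_i \ell_i^d$ must rearrange under a collision $\ell_i\to\ell_j$, extending the sign-flip argument of \cite{BS18} and exploiting the stratification of the Hilbert scheme of $r$ points on $\PP^1$ together with dimension counts for the dual varieties involved.
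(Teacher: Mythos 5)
Your overall strategy (apolarity, degeneration of real-rooted apolar forms, duality with coincident root loci) points in the same direction as the paper, but there is a genuine gap at the central step, and it is precisely the step that constitutes the paper's main technical contribution. When $r=k+i$ is an intermediate typical rank, the apolar object attached to $f$ is not a configuration of $r$ points but a positive-dimensional linear space $\Pi=(f^\perp)_{r}\subset\PP(D_{r})$, and the boundary condition is that the family of planes $\Pi_\varepsilon$ passes from meeting the semialgebraic region of real-rooted forms to missing it. A transversality argument (the paper's Lemma~\ref{GuilleminPollak}) then forces $\Pi_0$ to have excess tangency with some stratum $\Delta_{\lambda}\subset\PP(D_{r})$ of the discriminant, where $\lambda$ is a partition of $r=k+i$, i.e.\ $\Pi_0\in\CH_{j}(\Delta_{\lambda})$ for a suitable $j$; your picture of ``two real points colliding or leaving as a conjugate pair'' only captures the shallowest such contact and misses the deeper strata and higher-order tangencies that genuinely occur for $i\geq 1$. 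More importantly, you then assert that this degeneration makes $f$ tangent to a coincident root locus $\Delta_\lambda$ with $\lambda\vdash d$, hence $f\in\Delta_\lambda^\vee$. That conclusion does not follow from anything you have said: the tangency condition lives in $\PP(D_{k+i})$ and concerns partitions of $k+i$, while the asserted membership concerns duals of loci in $\PP^d$ indexed by partitions of $d$. Bridging these is exactly the content of Theorem~\ref{lemmaEstivo} and Corollaries~\ref{soluzione-del-problemino} and \ref{pippo}, which identify the pullbacks $\overline{\varPsi_{d,k+i}^{-1}(\CH_{j}(\Delta_{\lambda}))}$ set-theoretically as unions of dual varieties $(\Delta_{\nu})^{\vee}$ with $\nu\vdash d$, and which determine when these pullbacks have codimension one. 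Without this (or an equivalent computation) your first inclusion is not established.

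Two further remarks on scope. First, you do not need the reverse inclusion you describe as ``the main content'': since $\partial(\mathcal{R}_{d,r})$ has pure codimension one and each $(\Delta_{\rho})^{\vee}$ appearing is an irreducible hypersurface, the containment $\partial_{\textrm{alg}}(\mathcal{R}_{d,r})\subseteq\bigcup_{\rho}(\Delta_{\rho})^{\vee}$ already forces every irreducible component of the boundary to coincide with one of the $(\Delta_{\rho})^{\vee}$, which is all the theorem claims. Second, the precise identification of which duals actually occur --- what you propose to obtain via a sign-flip analysis of the coefficients $\alpha_i$ --- is not proved in the paper either; it is only verified for $d\leq 8$ and stated as an expectation in Remark~\ref{quasi-finito-1}, so you should not present it as part of the proof of the stated theorem.
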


Finally, we remark
 that the study of algebraic boundaries 
 for forms with more than two variables is a challenging and quite open problem, see \cite{Michalek2017,Ventura}.

The paper is organized as follows: in the preliminary Section \ref{preliminary}
we recall some basic notions and results about coincident root loci, 
higher associated subvarieties and apolar maps.
Section \ref{pullbacks} is devoted to the detailed analysis of the
pullbacks, via apolar maps, of higher associated varieties to coincident root loci. The main result of this section is Theorem \ref{lemmaEstivo}, whose two corollaries (Corollaries \ref{soluzione-del-problemino} and \ref{pippo}) are key tools in the proof of Theorem \ref{teoremaIntroduzione}.
In Section \ref{alg-boun} we prove Theorem \ref{teoremaIntroduzione}: more precisely, we consider the case of odd degree in Theorem~\ref{teoremaOdd}, and the case of even degree in
Theorem \ref{casopari}.

\section{Preliminary} \label{preliminary}

\subsection{Coincident root loci}\label{secCRL}

Let $r$ be a positive integer.
A partition of $r$ 
is an equivalence class, under reordering, of lists of positive integers
$\lambda=[\lambda_1,\ldots,\lambda_n]$
  such that $\sum_{i=1}^n\lambda_i=r$.
We denote by $|\lambda|$ the length $n$ of the partition.
Alternatively, the partition $\lambda$ can be  represented by the list of integers $m_1,\ldots, m_k$ 
defined as $m_j=|\{i:\lambda_i=j\}|$, and clearly $\sum_{j=1}^k jm_j=r$.

Given a partition $\lambda$ as above,
the {\it coincident root locus}
$\Delta_\lambda\subset \PP^r = \PP(\CC[x,y]_r)$ associated with $\lambda$ is the
set of 
binary forms $f$ of degree $r$ 
which admit a factorization $f=\prod_{i=1}^n \ell_i^{\lambda_i}$
for some linear forms 
$\ell_1,\ldots,\ell_n\in \CC[x,y]_1$.
These varieties 
have been extensively studied, see e.g. \cite{Weyman-1,Katz,Chipalkatti-1,Chipalkatti-2,Kurmann}.

We have a unirational parameterization of degree $m_1!m_2!\cdots m_k!$:
  \[
   \underbrace{\PP^1 \times \cdots \times \PP^1}_{n\ \mathrm{times}} 
   \longrightarrow \Delta_{\lambda}\subseteq \PP^r ,
   \quad
   (\ell_1,\ldots,\ell_n)\mapsto \prod_{i=0}^n \ell_i^{\lambda_i} .
  \]
In particular, the dimension of
 $ \Delta_{\lambda}$ is $n$.
The degree of $ \Delta_{\lambda}$ was determined by Hilbert \cite{Hilbert}. He showed that
\begin{equation*}
\deg(\Delta_\lambda)=\frac{n!}{m_1!m_2!\cdots m_k!}\lambda_1\lambda_2\cdots\lambda_n  .
\end{equation*}

 If $\lambda$ and $\mu$ are two partitions of $r$, we have 
 $\Delta_\mu \subseteq \Delta_\lambda$ if and only if $\lambda$ is a refinement of $\mu$ 
 (equiv., $\mu$ is a coarsening of $\lambda$).
 In \cite{Chipalkatti-1} and subsequently  in \cite{Kurmann} it has been shown that
 the singular locus $\Sing(\Delta_\lambda)$  
is given by the union of $\Delta_{\mu}$ for some suitable coarsenings $\mu$ of $\lambda$ 
(see \cite[Definition 5.2]{Chipalkatti-1} and
\cite[Proposition 2.1]{Kurmann}  for the precise description).
In particular, one has that $\Delta_\lambda$ is smooth if and only if $\lambda_1=\cdots=\lambda_n$.
Otherwise the singular locus is of 
codimension $1$. 

The dual variety $(\Delta_\lambda)^\vee$ of $\Delta_{\lambda}\subset\PP(\CC[x,y]_r)$ is a subvariety of 
the projective space $\PP(\CC[\partial_x,\partial_y]_r)$ of codimension $m_1+1$ (see \cite[Corollary~7.3]{Katz}).
In particular,  
$\Delta_\lambda^\vee$ is a hypersurface 
if and only if $\lambda_i\ge2$ for all $i$. In this case its degree has been computed in \cite[Theorem~1.4]{Oeding} and it is
\begin{equation}\label{luke}
\deg(\Delta_\lambda^\vee) = \frac{(n+1)!}{m_2!\cdots m_k!}(\lambda_1-1)(\lambda_2-1)\cdots(\lambda_n-1)  .
\end{equation}
Moreover, it is shown in 
 \cite[Corollary~2.6]{LeeSturmfels}  that
$(\Delta_\lambda)^\vee\subset \PP(\CC[\partial_x,\partial_y]_r)$  is 
given by the join of the $n-m_1$ coincident root loci 
$\Delta_{(d-\lambda_i+2,1^{\lambda_i-2})}$ for $1\le i\le n$ with $\lambda_i\ge2$.

\subsection{Higher associated subvarieties}\label{HAS-section}
Let $\GG(l,r)=\GG(l,\PP^r)$ denote the Grassmannian of $l$-dimensional projective subspaces of $\PP^r$.
Let $ X\subset \PP^r_{\mathbb C} $ be an irreducible projective variety of dimension $ n $.
Recall that the {\it $j$-th higher associated variety} 
$\CH_j(X)$ to $X$ is the closure of the set of the $(r-n-1+j)$-dimensional subspaces $H\subset \PP^r$ 
such that $H\cap X\neq\emptyset$ and $\dim(H\cap T_xX)\ge j$ for some smooth point $x\in H\cap X$, see \cite[Chapter~3, Section~2~E]{GKZ}. 
As it is known,
 $\CH_j(X)$ has codimension one in $\GG(r-n-1+j,\PP^r)$
 if and only if $0\leq j\leq \dim(X)-\mathrm{def}(X)$, where 
 $\mathrm{def}(X)$ denotes the dual defect of $X$ (see \cite[Chapter~3, Section~2~E]{GKZ} and \cite[Corollary~6]{Kohn}). 
 Recall also that,
 if $j$ is an integer with $0\leq j\leq n$, then
 the {\it $j$-\emph{th} polar degree of $X$}, denoted by $\delta_j(X)$, is 
 the degree of $\CH_j(X)$ in $\mathbb{G}(r-n-1+j,\PP^r)$ if $\CH_j(X)$ is a hypersurface,
 while it is $0$ otherwise (see \emph{e.g.} \cite{Holme1988}, see also \cite[Section~4]{Kohn}). 

For our purpose it is useful to consider a natural generalization of higher associated subvarieties, which we now introduce.
For any integers $j,l$ with $0\leq j\leq l \leq r$ and $j\leq n$, we define 
\[
\mathbb{I}_{j}^{l} = \mathbb{I}_{j}^{l}(X) = {\{(x,B,P)\in (X\setminus\mathrm{Sing(X)}) \times \GG(j,\PP^r)\times\GG(l,\PP^r) : x\in B, B\subseteq T_x X, B\subseteq P\}}.
\]
\begin{proposition}\label{prop0} The scheme $\mathbb{I}_{j}^{l}$ is smooth and irreducible of dimension
 \[
\dim \mathbb{I}_{j}^{l} = -j^{2}+(n-r+l) j+r l-l^{2}+n ,
\]
that is 
\[
 \dim \GG(l,r) - \dim \mathbb{I}_{j}^{l} =  1 + (j+1) (r-n-1+j-l).
\]
\end{proposition}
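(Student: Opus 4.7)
The plan is to decompose $\mathbb{I}_j^l$ via two successive Grassmann bundle projections over the smooth locus $U := X\setminus \mathrm{Sing}(X)$. Introducing the intermediate incidence variety
\[
\mathbb{I}_j = \{(x,B)\in U\times\GG(j,\PP^r) : x\in B\subseteq T_xX\} ,
\]
the natural forgetful morphisms give a tower $\mathbb{I}_j^l \xrightarrow{\pi_2} \mathbb{I}_j \xrightarrow{\pi_1} U$. I would verify that each arrow is a smooth morphism with smooth irreducible fibers, and then conclude that $\mathbb{I}_j^l$ is smooth and irreducible of the claimed dimension by summing relative dimensions.

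For $\pi_1$, I would observe that the fiber over a smooth point $x\in X$ parametrizes $j$-planes $B$ containing $x$ and contained in $T_xX\cong\PP^n$; passing to the quotient by the line $x$ identifies this fiber with $\GG(j-1,\PP^{n-1})$, smooth and irreducible of dimension $j(n-j)$. To globalize, I would use that the tangent spaces $\{T_xX\}_{x\in U}$ assemble into a projective $\PP^n$-bundle over $U$, equipped with the tautological section $x\mapsto x$, so that $\pi_1$ is the relative Grassmannian of $j$-planes through that section. For $\pi_2$, the fiber over $(x,B)$ parametrizes $l$-planes $P\subseteq\PP^r$ containing the fixed $B$, equivalently $(l-j-1)$-planes in $\PP^{r}/B\cong\PP^{r-j-1}$; this realizes $\pi_2$ as a Grassmann bundle over $\mathbb{I}_j$ of relative dimension $(l-j)(r-l)$.

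Summing dimensions gives
\[
\dim\mathbb{I}_j^l = n + j(n-j) + (l-j)(r-l) = -j^{2}+(n-r+l) j+r l-l^{2}+n ,
\]
which is the first formula. The second formula follows by a short computation from $\dim\GG(l,r)=(l+1)(r-l)$: the difference simplifies to $j^2 + (r-n-l)j + (r-n-l)$, which equals $1+(j+1)(r-n-1+j-l)$.

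The only subtle point is globalizing the pointwise description of $\pi_1$: once one invokes smoothness of $U$ to ensure that the family of tangent spaces is an algebraic projective bundle and that the base point defines a tautological section, $\pi_1$ becomes an honest Grassmann bundle. Smoothness and irreducibility then propagate up the tower by standard arguments, since both $U$ and the fibers of $\pi_1$ and $\pi_2$ are smooth and irreducible.
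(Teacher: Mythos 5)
Your argument is correct and is essentially the paper's own proof: both factor $\mathbb{I}_j^l$ through the intermediate incidence variety $\mathbb{I}_j^j$ of pairs $(x,B)$ and compute the two fibrations as Grassmann bundles with fibers $\GG(j-1,n-1)$ and $\GG(l-j-1,r-j-1)$, summing relative dimensions. The dimension count and the simplification of $\dim\GG(l,r)-\dim\mathbb{I}_j^l$ both check out.
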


\begin{proof}
We have the following diagram of natural projections:
\begin{equation}\label{diag}
\xymatrix{ && \mathbb{I}_{j}^{l} \ar@{->}[rd]^{\pi_2} \ar@{->}[ld]_{\pi_1} \\
 & \mathbb{I}_{j}^{j} \ar@{->}[ld]_{q_1} \ar@{->}[rd]^{q_2} & & \GG(l,\PP^r)\\ 
X\setminus \mathrm{Sing}(X) & & \GG(j,\PP^r)  } 
\end{equation}
For each $x\in X\setminus \mathrm{Sing}(X)$, we have
$q_1^{-1}(x)\simeq\{B\in\GG(j,T_x X):x\in B\}\simeq \GG(j-1,n-1)$. 
Thus, the projection $q_1$ 
is a flat morphism with smooth fibers, and therefore 
$\mathbb{I}_j^{j}$ is smooth and irreducible of dimension 
\[
 \dim \mathbb{I}_j^j = n + \dim \GG(j-1,n-1) .
\]
If $(x,B)\in \mathbb{I}_j^j$, then $\pi_1^{-1}(x,B) \simeq  \{P\in \GG(l,\PP^r): B\subseteq P\}\simeq \GG(l-j-1,r-j-1)$. Thus also the projection $\pi_1$ is a flat morphism with smooth fibers, and it follows that  $\mathbb{I}_j^l$ is smooth and irreducible of dimension 
\[
 \dim \mathbb{I}_j^l = \dim \mathbb{I}_j^j + \dim \GG(l-j-1,r-j-1) .
 \]
Hence the claim of the proposition follows.
\end{proof}
\begin{definition}\label{defVarCH}
Let $\overline{\mathbb{I}_{j}^{l}} = \overline{\mathbb{I}_{j}^{l}(X)}$ be the closure of $\mathbb{I}_{j}^{l}$
inside $X \times \GG(j,\PP^r)\times\GG(l,\PP^r)$, and denote by $\pi_2:\overline{\mathbb{I}_{j}^{l}}\to \GG(l,\PP^r)$ the last projection.
The scheme $\pi_2(\overline{\mathbb{I}_{j}^{l}}) = \overline{\pi_2(\mathbb{I}_{j}^{l})} \subset \GG(l,\PP^r)$ will 
be denoted by $\Xi_{j}^{l}=\Xi_{j}^{l}(X)$.
\end{definition}
\begin{remark}
The equations of the closure  $\overline{\mathbb{I}_{j}^{l}(X)}$ inside 
$X \times \GG(j,\PP^r)\times\GG(l,\PP^r)\subset\PP^r\times \PP^{\binom{r+1}{j+1}-1} \times \PP^{\binom{r+1}{l+1}-1} $,
and hence those of 
$\Xi_{j}^{l}(X)\subset\PP^r$
can be explicitly determined by standard elimination techniques, once one knows the equations of $X\subset\PP^r$.
In particular,
we point out that, if 
$I$ denotes the homogeneous ideal of $X\subset\PP^r$, then 
the homogeneous ideal of 
$\Xi_{j}^{l}(X)\subset \mathbb{G}(l,\PP^r)$ can be calculated using the command 
\texttt{tangentialChowForm(I,j,l)}, provided by the package \emph{Resultants} \cite{Resultants-package} included with {\sc Macaulay2} \cite{macaulay2}.
\end{remark}
\begin{remark}\label{codimAspettataCH0}
 It follows from the definition and Proposition~\ref{prop0} that we have
 \begin{equation}
  \codim_{\GG(l,r)}(\Xi_{j}^{l}(X)) \geq 1 + (j+1) (r-n-1+j-l).
 \end{equation}
 When $j=0$ and $l<r-n$,
 we have that $\pi_2$ is birational onto its image. Indeed, 
 if $l$ is less than the codimension of $X$, then a generic $l$-dimensional 
 projective subspace intersecting $X$ meets $X$ at only one point 
 (see also the proof of \cite[Chapter~3, Proposition~2.2]{GKZ}).
 Hence, when $j=0$ and $l<r-n$,
 the above inequality is an equality, that is 
 \begin{equation}\label{codimCH0gen}
  \codim_{\GG(l,r)}(\Xi_{0}^{l}(X)) = r-n-l = \codim_{\PP^r}(X) - l.
 \end{equation}
\end{remark}
\begin{example}\label{eseCHj}
  If $l = r-n-1+j$, then $\Xi_j^l(X)=\CH_j(X)$ is the
  $j$-th associated variety to $X$.
  In particular, we point out that
 $\Xi_0^{r-n-1}(X)$
  is the Chow hypersurface;
  if $\deg(X)\geq 2$,
  then $\Xi_1^{r-n}(X)$ 
 is the Hurwitz hypersurface (see \cite{Sturmfels-Hurw});
 and
  $\Xi_n^{r-1}(X)\subset \mathbb{G}(r-1,\PP^r)$ 
 is the dual variety of $X$.
 \end{example}
 
 \begin{example}
 Let $\mathcal P$ be a complete flag in $\PP^r$, that is, a nested sequence of projective subspaces
 $\emptyset\subset P_0 \subset \cdots \subset P_{r-1}\subset P_{r}=\PP^r$  
 with $\dim P_i = i$, and let $a=(a_0,\ldots,a_{l})$ be 
  a sequence of integers with $r-l\geq a_0\geq \cdots \geq a_l\geq 0$. 
  Then the so-called Schubert variety $\Sigma_a(\mathcal P)\subset \GG(l,r)$ (see \emph{e.g.} \cite[Chapter~1, Section~5]{griffiths-harris})
  coincides with the following intersection:
    \[
   \Sigma_a(\mathcal P) = \bigcap_{i=0}^{l} \Xi_{i}^{l}(P_{r-l+i-a_{i}}) .
  \]
 \end{example}

 \subsection{Apolar maps and apolarity} 

Let $\KK\subseteq\CC$ be a field. 
Let $R=\KK[x,y]$ be a polynomial ring and let $D=\KK[\partial_x,\partial_y]$ be
the dual ring of differential operators.
The ring $D$ acts on $R$ with the usual rules of differentiations, and we have the pairing
\[
 R_d \otimes D_r \to R_{d-r} .
\]

If $f = \sum_{i=0}^d \binom{d}{i} a_i x^{d-i} y^i\in R_d$, the \emph{apolar ideal} $f^{\perp}\subset D$ is given 
by all the operators which annihilate $f$, that is 
\[
 f^{\perp} = \{g(\partial_x,\partial_y)\in D : g(f)=0\} .
\]
For instance,
 if $l=ax+by\in R_1$, then $l^{\perp}$ is generated by the operator $-b\partial_x + a\partial_y\in D_1$.

The component $(f^{\perp})_r = f^{\perp}\cap D_r$ 
is the kernel 
of the linear map 
$
 D_r\to R_{d-r}
$,
which in
the standard basis 
is represented  by the \emph{catalecticant (or Hankel) matrix} (up to multiplying the rows by scalars), see e.g. \cite{EHRENBORG1993157}:
\[
A^{d,r} = \begin{pmatrix}
a_0 & a_1 & \cdots & a_{r}\\
a_1 & a_2 & \cdots & a_{r+1}   \\
\vdots   & \vdots   & \ddots &   \vdots           \\
a_{d-r}   & a_{d-r+1}   & \cdots     &  a_{d}         
\end{pmatrix} .
\]

For a general form $f\in R_d$, with $d \geq r\geq d-r$,
the matrix $A^{d,r}$ has maximal rank, and hence 
$\dim \mathrm{ker}(A^{d,r}) = 2r-d$.
Thus we have a rational map, called the \emph{apolar map},
\begin{equation}
\varPsi_{d,r}: \PP^d\dashrightarrow \Gr(d-r,r)\simeq\Gr(2r-d-1,r)
\end{equation}
which 
associates to a general binary form $f$ of degree $d$ 
the projective
$(2r-d-1)$-dimensional 
 subspace 
 $
 \PP((f^\perp)_r)\subset\PP(D_r)$. 
In coordinates the map $\varPsi_{d,r}$ is defined by the maximal minors of the matrix $A^{d,r}$, thus by forms of degree $d-r+1$.
For $d=r$, the map $\varPsi_{d,d}$ gives an identification between $\PP^d=\PP(R_d)$
and $\Gr(d-1,d)=\PP(D_d)^{\vee}$.

 Whenever $r\geq \frac{d+2}{2}$, we have that
 $\varPsi_{d,r}: \PP^d\dashrightarrow \Gr(2r-d-1,r)$ is a birational map onto its image. This implies 
 that we can recover a general binary form of degree $d$ from the component $(f^\perp)_r\subset f^{\perp}$.
A more precise result is the following (see e.g.\ \cite[Theorem~1.44]{IarrobinoKanev}, see also \cite{Blekherman}).
\begin{proposition}
  Assume that $f\in R_d$ is not a power of a linear form.
  Then its apolar ideal $f^\perp$ is 
  generated by two forms $g,g'$ such that $d = \deg g+\deg g'-2$ and
  $\mathrm{gcd}(g,g') = 1$.
  Conversely, any two such forms generate an ideal $f^\perp$ for 
  some projectively unique
  $f\in R_d$.  
\end{proposition}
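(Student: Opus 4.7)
The plan is to identify $D/f^\perp$ as an Artinian Gorenstein graded $\KK$-algebra and then exploit the classical fact that in two variables every such algebra is a complete intersection.

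First I would check that $f^\perp$ is an Artinian homogeneous ideal of $D$. The map $g\mapsto g(f)$ identifies $D/f^\perp$ with the space of ``derivatives'' $\{g(f):g\in D\}\subseteq R_{\leq d}$, which is finite-dimensional; hence $f^\perp$ has height~$2$ in the two-variable polynomial ring $D$.

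Next I would establish that $D/f^\perp$ is Gorenstein with one-dimensional socle in degree $d$. For each $k$ the bilinear form
\[
(D/f^\perp)_k \times (D/f^\perp)_{d-k} \to \KK,\qquad (g,h)\mapsto (gh)(f),
\]
is well-defined and perfect: if $(gh)(f)=0$ for all $h\in D_{d-k}$, then $g(f)\in R_{d-k}$ is killed by all of $D_{d-k}$ under the (perfect) apolarity pairing $R_{d-k}\otimes D_{d-k}\to\KK$, so $g(f)=0$, i.e.\ $g\in(f^\perp)_k$. In particular $\dim(D/f^\perp)_d=\dim(D/f^\perp)_0=1$ and the algebra is zero in higher degrees.

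I would then invoke the classical structure theorem (Macaulay; see \emph{e.g.}~\cite[Thm.~5.3]{IarrobinoKanev}): every graded Artinian Gorenstein quotient of a polynomial ring in two variables is a complete intersection. Hence $f^\perp=(g,g')$ with $\gcd(g,g')=1$ (coprimality is forced by height~$2$). The Hilbert series of a complete intersection of bi-degrees $(\deg g,\deg g')$ has socle in degree $\deg g+\deg g'-2$, and comparison with the socle degree computed above gives $d=\deg g+\deg g'-2$.

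For the converse, given coprime $g,g'\in D$ with $\deg g+\deg g'=d+2$, the ideal $(g,g')$ is a height-$2$ complete intersection, so $D/(g,g')$ is Artinian Gorenstein with one-dimensional socle in degree~$d$. By Macaulay's inverse-system duality this socle corresponds, via the apolarity pairing, to a $1$-dimensional subspace $\KK\cdot f\subseteq R_d$, and a direct verification then shows $f^\perp=(g,g')$. The only delicate point is making precise the duality in step two and the fact that taking apolar ideals and inverse systems are mutually inverse bijections between forms in $R_d$ (up to scalar) and Artinian Gorenstein ideals of $D$ with socle degree $d$; all the remaining ingredients are elementary Hilbert-series arithmetic for complete intersections in two variables.
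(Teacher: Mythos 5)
Your proposal is correct in outline, but note that the paper does not actually prove this proposition: it is quoted from the literature, with a pointer to \cite[Theorem~1.44]{IarrobinoKanev} (see also \cite{Blekherman}). What you have written is essentially the standard argument \emph{behind} that citation, so there is no divergence of method to speak of --- rather, your sketch reduces the statement to the same classical input. Concretely: your step establishing that the pairing $(D/f^\perp)_k\times (D/f^\perp)_{d-k}\to\KK$, $(g,h)\mapsto (gh)(f)$, is perfect is correct and does show that the apolar algebra is Artinian Gorenstein with one-dimensional socle in degree $d$; the heavy lifting is then entirely in the assertion that a graded Artinian Gorenstein quotient of $\KK[\partial_x,\partial_y]$ is a complete intersection (Hilbert--Burch plus the self-duality of the minimal free resolution forces the ideal to be $2$-generated), and the Hilbert-series computation $\frac{(1-t^{a})(1-t^{b})}{(1-t)^2}$ correctly identifies the socle degree as $a+b-2=d$, forcing $\deg g+\deg g'=d+2$; coprimality follows from height $2$ as you say. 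The converse via the Macaulay inverse-system correspondence is likewise the standard route and gives the projective uniqueness of $f$. Two small remarks: the hypothesis that $f$ is not a power of a linear form is never used in your argument, and indeed the conclusion holds in that case too (for $f=x^d$ one gets $f^\perp=(\partial_y,\partial_x^{d+1})$, with degrees summing to $d+2$), so nothing breaks there; and your citation ``Thm.~5.3'' of Iarrobino--Kanev should be the statement the paper itself points to, namely Theorem~1.44. If you intend this as a self-contained proof rather than a reduction, the two points you flag yourself --- the precise form of Macaulay duality and the complete-intersection structure theorem in codimension two --- are exactly the ones that would need to be written out.
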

We say that $f\in R_d$ is \emph{ generated in generic degrees} 
if  
  $(\deg g,\deg g') = (\lceil\frac {d+1}2\rceil,\lfloor\frac {d+3}2\rfloor)$.
The forms that are not generated in generic degrees form a subvariety of $R_d$,
which has codimension $1$ if $d$ is even and codimension $2$ if $d$ is odd.
Indeed this subvariety is defined by the maximal minors 
of the intermediate catalecticant matrix $A^{d,\lfloor\frac{d+1}{2}\rfloor}$ of size 
$\lceil\frac {d+1}2\rceil  \times  \lfloor\frac {d+3}2\rfloor$.

\medskip

Let $f\in R_d=\KK[x,y]_d$ be a binary form of degree $d$. 
A classical result is the following: 
\begin{lemma}[Apolarity Lemma]
\label{apolarity}
  Assume $f\in R_d$ and let $\ell_i\in R_1$ be distinct linear forms for $1\le i\le r$. 
There are coefficients $\alpha_i\in \KK$ such that $f=\sum_{i=1}^r \alpha_i(\ell_i)^d$ if and only 
if the operator  $\ell_1^\perp\circ \cdots \circ \ell_r^\perp$ is in the apolar ideal $f^\perp$.
  \end{lemma}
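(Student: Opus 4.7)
The plan is to let $m_i\in D_1$ denote the linear differential operator spanning $(\ell_i^\perp)_1$ --- explicitly, if $\ell_i=a_ix+b_iy$ then $m_i=-b_i\partial_x+a_i\partial_y$ --- so that the operator in the statement is $g:=m_1\circ\cdots\circ m_r\in D_r$. A direct computation gives $m_i(\ell_i^d)=d\,\ell_i^{d-1}\,m_i(\ell_i)=0$. For the forward direction, since the $m_i$ mutually commute, I may reorder the composition so that $m_j$ acts first, obtaining $g(\ell_j^d)=0$ for every $j$; by $\KK$-linearity, $g(f)=0$ whenever $f=\sum_j\alpha_j\ell_j^d$, and hence $g\in f^{\perp}$.

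For the converse, suppose $g\in f^{\perp}$ and set $V:=\ker\bigl(R_d\to R_{d-r},\;h\mapsto g(h)\bigr)$, so that $V$ contains both $f$ and the span $W:=\langle \ell_1^d,\ldots,\ell_r^d\rangle$ (by the forward direction applied to the standard basis of coefficients), where we may restrict to the nontrivial case $r\le d$. Since the points $[\ell_i]\in\PP^1$ are distinct, a Vandermonde argument gives $\dim W=r$, so it suffices to prove $\dim V=r$. For this I would invoke the non-degenerate apolarity pairing $R_k\otimes D_k\to\KK$ together with the identity $\langle g(h),p\rangle = p(g(h)) = (gp)(h) = \langle h,gp\rangle$ for $h\in R_d$ and $p\in D_{d-r}$: under the induced identifications $R_d\cong D_d^{\ast}$ and $R_{d-r}\cong D_{d-r}^{\ast}$, the differentiation map $h\mapsto g(h)$ becomes the transpose of the multiplication map $D_{d-r}\to D_d$, $p\mapsto gp$. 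Since $D$ is an integral domain and $g\ne 0$, this multiplication is injective, its transpose is therefore surjective, and hence $\dim V=(d+1)-(d-r+1)=r=\dim W$. Consequently $V=W$, and any such $f$ admits a decomposition of the required form.

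The only real subtlety is the duality step in the converse, where one must carefully justify that the differentiation map $g(\cdot):R_d\to R_{d-r}$ is the transpose of multiplication by $g$ on $D$; once this is in place, both implications reduce to the Vandermonde linear independence of $\ell_1^d,\ldots,\ell_r^d$ (which is where the distinctness hypothesis actually enters) together with the integrality of the polynomial ring $D$.
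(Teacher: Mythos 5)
The paper states the Apolarity Lemma without proof, labelling it a classical result, so there is no in-paper argument to compare against; your proof is correct and is the standard one. Both directions check out: the forward implication follows from $m_i(\ell_i^d)=0$ and commutativity of the constant-coefficient operators, and the converse correctly identifies $\ker\bigl(g(\cdot)\colon R_d\to R_{d-r}\bigr)$ as $r$-dimensional by dualizing to the injective multiplication map $p\mapsto gp$ on the domain $D$, so that this kernel equals the span of $\ell_1^d,\ldots,\ell_r^d$ (which has dimension $r$ by Vandermonde, the only place distinctness is used); your handling of the degenerate range $r>d$ is also fine, since there both sides of the equivalence hold trivially.
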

It follows that
a form $f$ has rank less than or equal to $r$ if and only if $(f^\perp)_r = f^\perp \cap D_r$ 
contains a form 
with all roots distinct and in $\KK$.
Recall that when $\KK=\RR$, such a form is called 
\emph{real-rooted}.

\section{Pullbacks of higher associated varieties to coincident root loci}
\label{pullbacks}

In this section we study the geometry of the pullbacks, via apolar maps, of higher associated varieties to coincident root loci, $\overline{\varPsi_{d,r}^{-1}(\mathrm{CH}_j(\Delta_{\lambda}))}$. This analysis is a key tool in the description of the algebraic boundaries that we will carry out in Section \ref{alg-boun}, and we think it is interesting in itself. 

Let $r$ be a positive integer, and 
let 
$\lambda=[\lambda_1,\ldots,\lambda_n]$ be a partition of $r$ 
of length 
  $|\lambda|=n$.
Consider
for any integer $0\leq j\leq \min\{n,r-n-1\}$ the following set of partitions
\begin{equation}\label{def-discendenti}
  \mathcal{D}_j(\lambda)=\{\lambda'=[\lambda_1-\iota_1,\lambda_2-\iota_2,\ldots,\lambda_n-\iota_n]: \iota_i\in\{0,1\},\ \iota_1+\cdots+\iota_n=j\},
  \end{equation}
and let us fix $\lambda'\in\mathcal{D}_j(\lambda)$.
Let $\Delta_\lambda\subset\PP^r$ (resp. $\Delta_{\lambda'}\subset\PP^{r-j}$)
be the coincident root locus corresponding to the partition $\lambda$ (resp. $\lambda'$).
Let
$d=r+n-j$ and 
consider the apolar maps:
\begin{align*}
\varPsi_{d,r} &:\PP^{d}\dashrightarrow \Gr(r - n - 1 + j,r) , \\
 \varPsi_{d,r-j} &:\PP^{d}\dashrightarrow \Gr(r - j - n - 1,r-j) .
\end{align*}
We consider the $j$-th higher associated variety of $\Delta_\lambda\subset\PP^{r}$ (see Subsection \ref{HAS-section}),
\[\mathrm{CH}_j(\Delta_{\lambda})\subset \Gr(r - n - 1 + j,r),\]
which is a hypersurface if and only if $j\le n-m_1(\lambda)$, where
$m_1(\lambda)$ is the number of $1$ in the partition $\lambda$.
Set $n'=|\lambda'|\le n$
and consider also the irreducible variety
associated to $\Delta_{\lambda'}\subset\PP^{r-j}$  (see Definition~\ref{defVarCH} and Example~\ref{eseCHj}),
\[\mathrm{\Xi}_0^{r-j-n-1}(\Delta_{\lambda'})\subset \Gr(r - j - n - 1,r-j). \]
By applying formula
  \eqref{codimCH0gen}
of Remark~\ref{codimAspettataCH0}
 we compute that the codimension of   $\mathrm{\Xi}_0^{r-j-n-1}(\Delta_{\lambda'})$ 
 in 
  $\Gr(r - j - n - 1,r-j)$ is
\begin{equation}\label{formula-codimensione}
\codim_{\PP^{r-j}}(\Delta_{\lambda'}) - (r-j-n-1)=(r-j)-{n'}-(r-j-n-1)=n-{n'}+1. 
\end{equation}

\begin{theorem}\label{lemmaEstivo}
Let notation be as above. Then
the following formula holds set-theoretically:
\begin{equation}\label{eqEstiva}
\overline{\varPsi_{d,r}^{-1}(\mathrm{CH}_j(\Delta_{\lambda}))}  =
\bigcup_{\lambda'\in \mathcal{D}_j(\lambda)} \overline{\varPsi_{d,r-j}^{-1}(\mathrm{\Xi}_0^{r-j-n-1}(\Delta_{\lambda'}))}  .
\end{equation}  
\end{theorem}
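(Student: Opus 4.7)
The plan is to prove the two set-theoretic inclusions separately, working at a generic point of each side and concluding by taking Zariski closures. Unwinding the definitions, $f \in \varPsi_{d,r}^{-1}(\CH_j(\Delta_\lambda))$ means that the subspace $(f^\perp)_r \subset D_r$ contains a smooth point $g = \prod_{i=1}^n \ell_i^{\lambda_i}$ of $\Delta_\lambda$ (with distinct $\ell_i$ in the generic case) and meets the tangent space $T_g\Delta_\lambda$ in affine dimension at least $j+1$. Differentiating the parametrization $(\ell_1,\ldots,\ell_n)\mapsto\prod \ell_i^{\lambda_i}$ yields the key description $T_g\Delta_\lambda = g_0 \cdot D_n$, where $g_0 := \prod_{i=1}^n \ell_i^{\lambda_i-1} \in D_{r-n}$. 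The right-hand side condition amounts to $(f^\perp)_{r-j} \cap \Delta_{\lambda'} \neq \emptyset$ for some $\lambda' \in \mathcal{D}_j(\lambda)$.

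The heart of the argument is a ``division by $g_0$'' isomorphism: setting $f' := g_0(f) \in R_{d-r+n} = R_{2n-j}$ and using $(g_0 p)(f) = p(g_0(f)) = p(f')$ for $p \in D_n$, one obtains a linear bijection
\[
 (f^\perp)_r \cap g_0\cdot D_n \ \xrightarrow{\ \sim\ }\ (f')^\perp_n , \qquad g_0\cdot p \ \longmapsto\ p.
\]
Under this bijection the tangency condition reads $\dim(f')^\perp_n \geq j+1$, while $g \in f^\perp$ becomes $\prod_{i=1}^n \ell_i \in (f')^\perp_n$.

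For the inclusion $\supseteq$, given $g' \in \Delta_{\lambda'} \cap (f^\perp)_{r-j}$ I write $g' = g_0 \cdot h$ with $h = \prod_{i:\iota_i=0} \ell_i \in D_{n-j}$, filling in distinct generic linear forms for the ``free'' indices (those with $\lambda_i = 1$ and $\iota_i = 1$) so that all $\ell_i$ are distinct. Then $g := g' \cdot \prod_{i:\iota_i = 1}\ell_i = \prod_{i=1}^n\ell_i^{\lambda_i}$ is a smooth point of $\Delta_\lambda$; it lies in $(f^\perp)_r$ since $f^\perp$ is an ideal, and the chain $g' \cdot D_j \subseteq g_0 \cdot D_n = T_g\Delta_\lambda$ provides a $(j+1)$-dimensional piece of $(f^\perp)_r \cap T_g\Delta_\lambda$, so $f$ is in the left-hand side.

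For the harder inclusion $\subseteq$, I apply the structure theorem for apolar ideals of binary forms to $(f')^\perp = (g_1',g_2')$, writing $\alpha_1 := \deg g_1' \leq \deg g_2' =: \alpha_2$ with $\alpha_1 + \alpha_2 = 2n-j+2$. A short Koszul-type computation shows that the condition $\dim (f')^\perp_n \geq j+1$ forces $\alpha_1 \leq n-j$, whence $\alpha_2 \geq n+2$ and $(f')^\perp_n = g_1' \cdot D_{n-\alpha_1}$ consists entirely of multiples of $g_1'$. From $\prod_i \ell_i \in (f')^\perp_n$ I deduce $g_1' \mid \prod_i \ell_i$, so $g_1' = \prod_{i \in S}\ell_i$ for some $S \subseteq \{1,\ldots,n\}$ with $|S| = \alpha_1 \leq n-j$. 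Picking any $I \subseteq \{1,\ldots,n\}\setminus S$ with $|I|=j$, the form $h_I := \prod_{i \notin I}\ell_i \in D_{n-j}$ is divisible by $g_1'$, so $h_I \in (f')^\perp_{n-j}$; translating back, $g_0 \cdot h_I = \prod_{i \in I}\ell_i^{\lambda_i-1}\prod_{i \notin I}\ell_i^{\lambda_i}$ lies in $(f^\perp)_{r-j} \cap \Delta_{\lambda'}$ for the partition $\lambda' \in \mathcal{D}_j(\lambda)$ given by $\iota_i = 1 \Leftrightarrow i \in I$. The main obstacle is the proper invocation of the structure theorem, and in particular the degenerate case where $f'$ is a pure power of a linear form has to be treated separately (using that $(f')^\perp$ is then generated in degree one, so the same extraction of $g_1'$ still goes through); the passage from the generic point of each pullback to the full closure is then automatic.
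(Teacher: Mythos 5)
Your proof is correct and follows essentially the same route as the paper's: both inclusions hinge on passing to the contracted form $f'=g_0(f)$ of degree $2n-j$, identifying $T_g\Delta_\lambda$ with $g_0\cdot D_n$, and showing that $(f')^{\perp}_n$ is generated by a single form of degree at most $n-j$ dividing $\ell_1\cdots\ell_n$. The only (interchangeable) local difference is that where you invoke the two-generator structure theorem for $(f')^{\perp}$ plus a Hilbert-function count, the paper extracts the low-degree apolar form from the rank of the intermediate catalecticant matrix and then identifies the tangency space as $\tilde{q}\cdot D_J$ by a dimension count.
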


\begin{proof} 
We first show the inclusion $\subseteq$.
Let $f\in {\varPsi_{d,r}^{-1}(\mathrm{CH}_j(\Delta_\lambda))}$.
Then $ \varPsi_{d,r}(f) = (f^{\perp})_{r}$ 
contains a  point 
 $h_0=\ell_1^{\lambda_1}\cdots\ell^{\lambda_n}_n\in \Delta_\lambda$ 
 such that 
   $L=T_{h_0}\Delta_\lambda \cap (f^\perp)_{r}$ is a projective linear space of dimension $J\geq j$.
Thus there exist  $J+1$ linearly independent forms $q_0,\ldots,q_J$ of degree $n$ 
such that 
\begin{gather*}
 q_0\ell_1^{\lambda_1-1}\cdots\ell^{\lambda_n-1}_n \in L , \\
 \vdots \\
 q_{J}\ell_1^{\lambda_1-1}\cdots\ell^{\lambda_n-1}_n \in L .
\end{gather*}
Consider now the form $$p=\ell_1^{\lambda_1-1}\cdots\ell^{\lambda_n-1}_n(f),$$
  which has degree 
  $d-(r-n) = (r+n-j)-(r-n) = 2 n - j $.
Of course $q_0,\ldots,q_J$ annihilates $p$, yielding that the dimension of 
  $(p^\perp)_n=\PP(\ker A_p^{2n-j,n})$ is at least $J$,
  where $A_p^{2n-j,n}$
  is the catalecticant matrix of $p$ of size $(n-j+1)\times (n+1)$.
  This means that the rank of  $A_p^{2n-j,n}$ 
  is at most $n-J$. 
  
  Let us consider the catalecticant matrix $A^{2n-j,n-J}_p$ of $p$ of size $(n+(J-j)+1)\times (n-J+1)$.
  A well-known property of the catalecticant matrices
   (see e.g. \cite[Proposition~9.7]{harris-firstcourse} or \cite[Theorem~1.56]{IarrobinoKanev}) 
  implies that the ideals generated by the minors of order $n-J+1$
   of the 
   \emph{generic} catalecticant
   matrices $A^{2n-j,n}$ and 
   $A^{2n-j,n-J}$
   are the same (both define the $(n-J+1)$-secant variety of the rational normal curve in $\PP^{2n-j}$).
   Thus,
   we deduce that 
  the rank of $A^{2n-j,n-J}_p$ is at most $n-J$ as well,
  and hence that 
  there exists a form $q$ of degree $n-J$ in $(p^\perp)_{n-J}$.
  Let $\tilde{q}=q\ell_1^{\lambda_1-1}\cdots\ell^{\lambda_n-1}_n$,
  which 
  of course belongs to $(f^\perp)_{r-J}$.
 From dimensional reasons it follows that 
\[
 L = \{ \tilde{q} h: h \mbox{ binary  form of degree }J\} ,
\]
and  since $h_0=\ell_1^{\lambda_1}\cdots\ell^{\lambda_n}_n\in L$ we conclude 
that $q$ divides $\ell_1\cdots\ell_n$.
Thus we have shown that there exists $\tilde{\lambda}\in\mathcal{{D}}_J(\lambda)$
such that 
$\tilde{q}\in \Delta_{\tilde{\lambda}}$.
By multiplying $\tilde{q}$ by $J-j$ suitable forms among $\{\ell_1,\ldots,\ell_n\}$
we can find an element in $(f^{\perp})_{r-j}\cap \Delta_{\lambda'}$ with $\lambda'\in\mathcal{{D}}_j(\lambda)$.
\medskip 

 We now show the inclusion $\supseteq$. 
Let $\lambda'=(\lambda'_1,\ldots,\lambda'_{n'})\in\mathcal{D}_j(\lambda)$, ${n'}\leq n$, and 
$f\in {\varPsi_{d,r-j}^{-1}(\mathrm{\Xi}_0^{r-j-n-1}(\Delta_{\lambda'}))}$.
Then $\psi_{d,r-j}(f) = (f^{\perp})_{r-j}$
intersects $ \Delta_{\lambda'}$ in a point $q$, say 
$q=\ell_1^{\lambda'_1}\cdots \ell_{{n'}}^{\lambda'_{{n'}}}$.
Let us consider 
  the $j$-dimensional linear subspace
 \[L=\{q h: h\mbox{ binary form of degree }j\} \subseteq (f^{\perp})_{r} \subset \PP^{r} .\]
 Clearly the intersection $L\cap\Delta_{\lambda}$ is not empty and 
 let $\tilde{q} = \ell_1^{\lambda_1}\cdots \ell_n^{\lambda_n}$ denote one of its elements. 
 (Notice that if all the $\ell_i$ are distinct, then 
 the cardinality of $L\cap\Delta_{\lambda}$ is the number $m(\lambda',\lambda)$, defined in \eqref{def-multiplicity} below.)
 The tangent space 
 $T_{\tilde{q}}(\Delta_{\lambda})$
 to  $\Delta_{\lambda}$ at the point $\tilde{q}$ consists 
 of the forms which are divisible by $\ell_1^{\lambda_1-1}\cdots \ell_n^{\lambda_n-1}$.
 Hence $T_{\tilde{q}}(\Delta_{\lambda})$ contains $L$, and it follows that 
 $\psi_{d,r}(f) = (f^{\perp})_{r}\in \CH_{j}(\Delta_{\lambda})$.
 This concludes the proof.
\end{proof}

\begin {remark}\label{duali}
As  a consequence of \cite[Corollary 2.3]{LeeSturmfels}, 
we have
  \begin{equation}\label{vera?}
 \overline{\varPsi_{d,r-j}^{-1}(\Xi_0^{r-j-n-1}(\Delta_{\lambda'}))} = 
 (\Delta_{(\lambda'_1+1,\ldots,\lambda'_{n'}+1, 1,\ldots,1)})^{\vee} , 
  \end{equation}
  where the last $1$ occur $n-n'$ times, and ${n'} = |\lambda'|\le n$.
  In particular, the variety in \eqref{vera?} has codimension $1$ if and only if $n'=n$. In this case, since 
   $  \Xi_0^{r-j-n-1}(\Delta_{\lambda'}) = \mathrm{CH}_0(\Delta_{\lambda'})$ (see Example~\ref{eseCHj}), we obtain  
\begin{equation}\label{eq-duali-CH}
\overline{\varPsi_{d,r-j}^{-1}(\Xi_0^{r-j-n-1}(\Delta_{\lambda'}))} = 
\overline{\varPsi_{d,r-j}^{-1}(\mathrm{CH}_0(\Delta_{\lambda'}))} = (\Delta_{(\lambda'_1+1,\ldots,\lambda'_n+1)})^{\vee} .
  \end{equation}
\end{remark}

\begin {corollary}\label{soluzione-del-problemino}
Let notation be as above. We have
\begin{equation}
\codim_{\PP^d}(\overline{\varPsi_{d,r}^{-1}(\mathrm{CH}_j(\Delta_{\lambda}))}) = 1
\mbox{ if and only if } 
\codim_{\Gr(r - n - 1 + j,r)}(\mathrm{CH}_j(\Delta_{\lambda})) = 1 .
\end{equation}
\end{corollary}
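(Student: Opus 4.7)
The plan is to reduce both sides of the equivalence to the same combinatorial condition on the partition $\lambda$, using Theorem~\ref{lemmaEstivo} and Remark~\ref{duali} as the main tools.

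First, I would apply Theorem~\ref{lemmaEstivo} to write
$$\overline{\varPsi_{d,r}^{-1}(\mathrm{CH}_j(\Delta_{\lambda}))} \;=\; \bigcup_{\lambda' \in \mathcal{D}_j(\lambda)} \overline{\varPsi_{d,r-j}^{-1}(\Xi_0^{r-j-n-1}(\Delta_{\lambda'}))}.$$
By Remark~\ref{duali}, each piece on the right equals the dual variety
$(\Delta_{(\lambda'_1+1,\ldots,\lambda'_{n'}+1,1,\ldots,1)})^{\vee}$ (with the trailing $1$ repeated $n-n'$ times), and that remark tells us explicitly that this dual is a hypersurface in $\PP^d$ if and only if $n' = |\lambda'| = n$, i.e.\ precisely when the partition contains no $1$. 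Consequently, the LHS has codimension $1$ in $\PP^d$ if and only if at least one $\lambda' \in \mathcal{D}_j(\lambda)$ has length $n$.

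Next, I would carry out the combinatorial check on $\mathcal{D}_j(\lambda)$. By the definition \eqref{def-discendenti}, an element of $\mathcal{D}_j(\lambda)$ is obtained by subtracting $1$ from $j$ distinct parts of $\lambda$, and its length drops below $n$ exactly when one of those subtractions hits a part equal to $1$. Thus some $\lambda' \in \mathcal{D}_j(\lambda)$ has length $n$ if and only if $\lambda$ has at least $j$ parts of size $\geq 2$, which means $j \leq n - m_1(\lambda)$.

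Finally, as recalled in the paragraph preceding Theorem~\ref{lemmaEstivo} (consistent with the fact that $\mathrm{def}(\Delta_\lambda) = m_1(\lambda)$, which follows from the codimension formula for $\Delta_\lambda^\vee$ in Section~\ref{secCRL}), the higher associated variety $\mathrm{CH}_j(\Delta_{\lambda})$ has codimension $1$ in $\Gr(r - n - 1 + j, r)$ if and only if $j \leq n - m_1(\lambda)$. Since both sides of the equivalence reduce to this same inequality, the corollary follows. The heavy geometric lifting is already packaged in Theorem~\ref{lemmaEstivo} and Remark~\ref{duali}, so no serious obstacle remains; the argument is essentially a bookkeeping of when a dual coincident-root-locus component is a hypersurface.
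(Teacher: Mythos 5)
Your proposal is correct and follows essentially the same route as the paper's proof: decompose via Theorem~\ref{lemmaEstivo}, identify the components as dual varieties through Remark~\ref{duali} (whose codimension statement rests on Katz's formula, which the paper cites explicitly), observe that $\mathcal{D}_j(\lambda)$ contains a length-$n$ partition exactly when $j\leq n-m_1(\lambda)$, and match this with the hypersurface criterion for $\CH_j(\Delta_\lambda)$. No gaps.
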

\begin{proof}
It follows from \cite[Corollary~7.3]{Katz} 
that
   the codimension of $ (\Delta_{(\lambda'_1+1,\ldots,\lambda'_{n'}+1, 1,\ldots,1)})^{\vee}$ in $\PP^d$ is $n-{n'}+1$.
   Thus, by  \eqref{vera?} and \eqref{formula-codimensione}, we have  
   \begin{equation*} 
 \codim_{\PP^d}(\overline{\varPsi_{d,r-j}^{-1}(\Xi_0^{r-j-n-1}(\Delta_{\lambda'}))})
 = n-n'+1 
 = \codim_{\GG(r-j-n-1,r)}(\Xi_0^{r-j-n-1}(\Delta_{\lambda'})).
\end{equation*}
   Therefore, from Theorem~\ref{lemmaEstivo} we deduce that 
   $\overline{\varPsi_{d,r}^{-1}(\mathrm{CH}_j(\Delta_{\lambda}))} $ has codimension $1$ in $\PP^d$
   if and only if $\mathcal D_j(\lambda)$ contains a partition $\lambda'$ of length $n'=n$, that is 
   if and only if $j\le n-m_1(\lambda)$. But this is exactly the condition for $\CH_j(\Delta_{\lambda})$ to be a hypersurface in $\Gr(r - n - 1 + j,r)$.
\end{proof}

For any integer $0\leq j\leq n - m_1(\lambda)$, consider now
the following subset of  the set $\mathcal{D}_j(\lambda)$ defined in \eqref{def-discendenti}: 
\begin{equation}\label{def-figli}
  \mathcal{F}_j(\lambda)=\{\lambda'=[\lambda_1-\iota_1,\lambda_2-\iota_2,\ldots,\lambda_n-\iota_n]: \iota_i\in\{0,1\},\ \iota_1+\cdots+\iota_n=j, |\lambda'|=n\} .
  \end{equation}
Then we have the following (see also Table~\ref{tabella-conj-fin}):
 \begin{corollary}\label{pippo} Let notation be as above. 
 If $j\leq n - m_1(\lambda)$, then
the following formula holds set-theoretically:
\begin{equation}\label{formulona}
\overline{\varPsi_{d,r}^{-1}(\mathrm{CH}_j(\Delta_{\lambda}))} = 
\bigcup_{\lambda'\in \mathcal{F}_j(\lambda)} \overline{\varPsi_{d,r-j}^{-1}(\mathrm{CH}_0(\Delta_{\lambda'})}) 
= \bigcup_{\lambda'\in \mathcal{F}_j(\lambda)} (\Delta_{(\lambda'_1+1,\ldots,\lambda'_n+1)})^{\vee}.
\end{equation}  
\end{corollary}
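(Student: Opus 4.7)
The plan is to build on Theorem~\ref{lemmaEstivo} combined with Remark~\ref{duali}, which together rewrite the left-hand side of \eqref{formulona} as
\[
\overline{\varPsi_{d,r}^{-1}(\mathrm{CH}_j(\Delta_\lambda))} = \bigcup_{\lambda' \in \mathcal{D}_j(\lambda)} \overline{\varPsi_{d,r-j}^{-1}(\Xi_0^{r-j-n-1}(\Delta_{\lambda'}))},
\]
with each summand identified with the dual variety $(\Delta_{(\lambda'_1+1,\ldots,\lambda'_{n'}+1,1,\ldots,1)})^{\vee}$ of codimension $n-n'+1$ in $\PP^d$. For $\lambda' \in \mathcal{F}_j(\lambda)$ (the case $n'=n$) the summand reduces to the hypersurface $(\Delta_{(\lambda'_1+1,\ldots,\lambda'_n+1)})^{\vee}$, and $\Xi_0^{r-j-n-1}(\Delta_{\lambda'}) = \mathrm{CH}_0(\Delta_{\lambda'})$ by Example~\ref{eseCHj}; together with \eqref{eq-duali-CH} this gives both the second equality in \eqref{formulona} and the inclusion $\supseteq$ of the first one for free.

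The real content of the corollary is the inclusion $\subseteq$. I would reduce it to the following combinatorial claim: for every $\lambda' \in \mathcal{D}_j(\lambda) \setminus \mathcal{F}_j(\lambda)$ there exists $\mu \in \mathcal{F}_j(\lambda)$ with $\Delta_{\lambda'} \subseteq \Delta_{\mu}$ inside $\PP^{r-j}$. Granted this, monotonicity of the operations $\Xi_0^{r-j-n-1}(\cdot)$ and $\overline{\varPsi_{d,r-j}^{-1}(\cdot)}$ immediately gives
\[
\overline{\varPsi_{d,r-j}^{-1}(\Xi_0^{r-j-n-1}(\Delta_{\lambda'}))} \subseteq \overline{\varPsi_{d,r-j}^{-1}(\mathrm{CH}_0(\Delta_{\mu}))} = (\Delta_{(\mu_1+1,\ldots,\mu_n+1)})^{\vee},
\]
placing the codimension-$\geq 2$ summand in the union inside one of the hypersurface summands, as required.

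To construct $\mu$ I write $\lambda' = \lambda - \iota$ and split the support of $\iota$ into $S' = \{i : \iota_i = 1,\, \lambda_i \geq 2\}$ and $T = \{i : \iota_i = 1,\, \lambda_i = 1\}$, so that $|T| = n - n'$. The hypothesis $j \leq n - m_1(\lambda)$ is exactly the statement $|A| \geq j = |S'| + |T|$ with $A = \{i : \lambda_i \geq 2\}$; hence I can pick $U \subseteq A \setminus S'$ with $|U| = |T|$ and set $\mu := \lambda - \kappa$, where $\kappa$ is the indicator of $S' \cup U$. By construction $\mu \in \mathcal{F}_j(\lambda)$, and $\mu$ refines $\lambda'$ in the sense of Section~\ref{secCRL}: the parts of $\mu$ and $\lambda'$ coincide at indices in $S'$ and at indices outside $T \cup U$, while any bijection $T \to U$ pairs the remaining $\mu$-parts into sums $1 + (\lambda_i - 1) = \lambda_i$ matching the remaining parts of $\lambda'$. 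The principal technical point, and the main obstacle, is this refinement check: the inequality $j \leq n - m_1(\lambda)$ enters precisely to guarantee the existence of $U$, and the whole argument rests on the elementary identity $1 + (\lambda_i - 1) = \lambda_i$ that lets a dropped $1$-entry be reabsorbed by decrementing a different $\geq 2$-entry.
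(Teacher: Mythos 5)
Your proposal is correct, and for the only nontrivial step it takes a genuinely different route from the paper. The paper's proof is a two-line dimension count: since $j\leq n-m_1(\lambda)$, the hypersurface $\mathrm{CH}_j(\Delta_\lambda)$ pulls back under $\varPsi_{d,r}$ to a set of pure codimension $1$, so in the set-theoretic equality of Theorem~\ref{lemmaEstivo} every component of the right-hand side of codimension $\geq 2$ (by Remark~\ref{duali}, exactly those with $|\lambda'|<n$) is automatically contained in the union of the codimension-$1$ ones and may be discarded. You instead discard them by an explicit combinatorial absorption: for each $\lambda'\in\mathcal{D}_j(\lambda)\setminus\mathcal{F}_j(\lambda)$ you produce $\mu\in\mathcal{F}_j(\lambda)$ refining $\lambda'$, hence $\Delta_{\lambda'}\subseteq\Delta_\mu$, and conclude by monotonicity of $\Xi_0^{r-j-n-1}(\cdot)$ and of pullback. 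Your construction of $\mu$ is correct: $T\cap(S'\cup U)=\emptyset$, so $\mu_t=1$ for $t\in T$ and $\mu_u=\lambda_u-1$ for $u\in U$, and the pairing $T\to U$ regroups these into the parts $\lambda'_u=\lambda_u$ while all other parts match termwise; the hypothesis $j\leq n-m_1(\lambda)$ enters exactly where you say, to guarantee $|A\setminus S'|\geq |T|$. What each approach buys: the paper's is shorter but rests on the (unstated) purity of the preimage of a divisor in the Grassmannian under a rational map defined by forms of one degree; yours is longer but purely set-theoretic and constructive, identifying precisely which hypersurface component $(\Delta_{(\mu_1+1,\ldots,\mu_n+1)})^\vee$ swallows each lower-dimensional piece $(\Delta_{(\lambda'_1+1,\ldots,\lambda'_{n'}+1,1,\ldots,1)})^\vee$ --- information the paper's argument does not provide and which is consonant with the multiplicity bookkeeping of Conjecture~\ref{conje}.
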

\begin{proof}
Since $j\leq n - m_1(\lambda)$, then the left-hand side of 
  \eqref{eqEstiva} is a hypersurface. Hence we can exclude from the right-hand side all the components of codimension higher than 1. By Remark \ref{duali}, this corresponds to ask that $|\lambda'|=n$, and 
  in this case 
  we apply formula \eqref{eq-duali-CH}.
\end{proof}

\subsection{On the multiplicities of the components}
Here we try to give a more precise geometric description of the pullbacks, via apolar maps, of higher associated varieties to coincident root loci.  In particular we study the multiplicity of the components which appear in formula \eqref{eqEstiva} of Theorem \ref{lemmaEstivo}.

For any $\lambda'=[\lambda'_1,\ldots,\lambda'_{n'}]\in \mathcal{D}_j(\lambda)$ (see \eqref{def-discendenti}), 
we define the {\it multiplicity of $\lambda'$ with respect to $\lambda$} as follows:
\begin{equation}\label{def-multiplicity}
\begin{split}
  m(\lambda',\lambda) =  \#\{(\iota_1,\ldots,\iota_n)&: \iota_i\in\{0,1\},\ \iota_1+\cdots+\iota_n=j,
  \\ &  [\lambda'_1+\iota_1,\ldots,\lambda'_{n'}+\iota_{n'},\iota_{n'+1},\ldots,\iota_n]=[\lambda_1,\ldots,\lambda_n]\}.
 \end{split}
 \end{equation}
In particular, if $\lambda'=[\lambda'_1,\ldots,\lambda'_n]\in \mathcal{F}_j(\lambda)$ (see \eqref{def-figli}), then the multiplicity is
\[m(\lambda',\lambda)=\#\{(\iota_1,\ldots,\iota_n): \iota_i\in\{0,1\},\ \iota_1+\cdots+\iota_n=j,\ [\lambda'_1+\iota_1,\ldots,\lambda'_n+\iota_n]=[\lambda_1,\ldots,\lambda_n]\} .\] 

Notice that the definition of $m(\lambda',\lambda)$ is motivated by the last part of the proof of  Theorem \ref{lemmaEstivo}.

Based on a 
number of experimental verifications (see Remark~\ref{remVerifiche} and the examples
 below)
and some general considerations
on the singular loci of the higher associated varieties, we formulate here the following:
 \begin{conjecture}\label{conje}
 If $j\leq n - m_1(\lambda)$, then
the following formula holds scheme-theoretically:
\begin{equation*} 
\overline{\varPsi_{d,r}^{-1}(\mathrm{CH}_j(\Delta_{\lambda}))} = 
\bigcup_{\lambda'\in \mathcal{F}_j(\lambda)} {m(\lambda',\lambda)}\, \overline{\varPsi_{d,r-j}^{-1}(\mathrm{CH}_0(\Delta_{\lambda'}))}
= \bigcup_{\lambda'\in \mathcal{F}_j(\lambda)} {m(\lambda',\lambda)}\, (\Delta_{(\lambda'_1+1,\ldots,\lambda'_n+1)})^{\vee}.
\end{equation*}  
\end{conjecture}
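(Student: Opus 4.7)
My plan is to resolve the Cartier divisor on the left-hand side by an incidence correspondence that records, for each $f$, a witness point $h \in \Delta_\lambda$ realising the membership $(f^{\perp})_r \in \mathrm{CH}_j(\Delta_\lambda)$, and then to show that the resolution is generically finite \'etale of degree $m(\lambda',\lambda)$ over each component $Z_{\lambda'} := (\Delta_{(\lambda'_1+1,\ldots,\lambda'_n+1)})^{\vee}$, $\lambda' \in \mathcal{F}_j(\lambda)$. The set-theoretic equality is already Corollary~\ref{pippo}, so the entire content of the conjecture is this multiplicity count.

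Concretely, I would set
\[
\mathcal{I}_\lambda = \overline{\left\{(f,h) \in \PP^d \times (\Delta_{\lambda} \setminus \Sing(\Delta_\lambda)) : h \in (f^{\perp})_r,\ \dim\bigl(T_h \Delta_\lambda \cap (f^{\perp})_r\bigr) \geq j\right\}},
\]
with projections $p_1 \colon \mathcal{I}_\lambda \to \PP^d$ and $p_2 \colon \mathcal{I}_\lambda \to \Delta_\lambda$, so that $p_1(\mathcal{I}_\lambda) = \overline{\varPsi_{d,r}^{-1}(\mathrm{CH}_j(\Delta_\lambda))}$ set-theoretically. Fixing $\lambda' \in \mathcal{F}_j(\lambda)$ and a generic $f_0 \in Z_{\lambda'}$, the second half of the proof of Theorem~\ref{lemmaEstivo} gives that $(f_0^{\perp})_{r-j}$ meets $\Delta_{\lambda'}$ transversally in a unique point $q = \ell_1^{\lambda'_1} \cdots \ell_n^{\lambda'_n}$ with pairwise distinct $\ell_i$; the $j$-dimensional projective subspace $L = q \cdot R_j \subseteq (f_0^{\perp})_r$ meets $\Delta_\lambda$ in exactly $m(\lambda',\lambda)$ smooth points $h_1,\ldots,h_{m(\lambda',\lambda)}$, corresponding to the $m(\lambda',\lambda)$ ways of promoting $\lambda'$ to $\lambda$ by adding $j$ ones to its entries, and each $T_{h_k}\Delta_\lambda$ contains $L$. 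Hence $p_1^{-1}(f_0)$ consists of $m(\lambda',\lambda)$ reduced points as a set.

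The crucial technical step is to verify that $\mathcal{I}_\lambda$ is smooth at each $(f_0,h_k)$ and that $p_1$ is unramified there with differential image exactly $T_{f_0} Z_{\lambda'}$. Using the explicit description $T_{h_k}\Delta_\lambda = \ell_1^{\lambda_1-1}\cdots\ell_n^{\lambda_n-1}\cdot R_n$, the local equations of $\mathcal{I}_\lambda$ translate into an $h$-dependent catalecticant-corank condition on the coefficients of $f$; a direct infinitesimal calculation, exploiting the transversality of $L\cap\Delta_\lambda$ inside $L$ established above, should then identify $T_{(f_0,h_k)}\mathcal{I}_\lambda$ and confirm that $d p_1$ is injective with image $T_{f_0} Z_{\lambda'}$.

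Granting this, the proof concludes by a standard projection-formula argument: on the dense open of $\PP^d$ where $\varPsi_{d,r}$ is defined and $\mathrm{CH}_j(\Delta_\lambda)$ is a reduced Cartier divisor in $\GG(r-n-1+j,r)$, the pullback divisor equals $(p_1)_\ast[\mathcal{I}_\lambda]$, and the generic \'etaleness of degree $m(\lambda',\lambda)$ over each $Z_{\lambda'}$ identifies this pushforward with $\sum_{\lambda'\in\mathcal F_j(\lambda)} m(\lambda',\lambda)\,[Z_{\lambda'}]$. I expect the hard part to be this \'etaleness, namely ruling out higher-order infinitesimal contact between $(f^{\perp})_r$ and $T_h\Delta_\lambda$ as $f$ varies inside $Z_{\lambda'}$; a secondary subtlety is ensuring that $\mathrm{CH}_j(\Delta_\lambda)$ is generically reduced as a hypersurface, since $\Delta_\lambda$ is typically singular in codimension one and in principle this could contribute embedded components to the higher associated variety.
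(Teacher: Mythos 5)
The statement you are trying to prove is left open in the paper: Conjecture~\ref{conje} is supported there only by the set-theoretic equality of Corollary~\ref{pippo}, by computer verification for all partitions of $r\leq 7$ (Remark~\ref{remVerifiche} and Table~\ref{tabella-conj-fin}), and by degree bookkeeping as in the $\lambda=(4,3,2,2)$ example. So there is no proof in the paper to compare yours against, and your proposal does not close the gap either: every step that carries the actual content of the conjecture is deferred. You assert that a ``direct infinitesimal calculation \ldots{} should then identify'' the tangent space of $\mathcal{I}_\lambda$ and confirm that $dp_1$ is injective with image $T_{f_0}Z_{\lambda'}$, and you yourself flag the generic \'etaleness of $p_1$ and the generic reducedness of $\CH_j(\Delta_\lambda)$ as ``the hard part'' and ``a secondary subtlety.'' These are precisely the assertions that would turn the conjecture into a theorem, and none of them is established. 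The reducedness worry is genuine: $\Delta_\lambda$ is singular in codimension one whenever the $\lambda_i$ are not all equal, so the incidence resolution defining $\CH_j(\Delta_\lambda)$ could a priori fail to be generically injective onto a component; the authors' own phrasing (``some general considerations on the singular loci of the higher associated varieties'') indicates they saw the same obstruction and still did not claim a proof.

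There is also a step in your final paragraph that would need more than \'etaleness even if \'etaleness were proved. The multiplicity of $Z_{\lambda'}$ in the pullback divisor $\varPsi_{d,r}^{*}(\CH_j(\Delta_\lambda))$ is the order of vanishing of a defining equation of $\CH_j(\Delta_\lambda)$ along $\varPsi_{d,r}(Z_{\lambda'})$; identifying this with the cardinality $m(\lambda',\lambda)$ of the fibre $p_1^{-1}(f_0)$ requires knowing that, locally analytically near the generic point $\Pi_0=\varPsi_{d,r}(f_0)$, the hypersurface $\CH_j(\Delta_\lambda)$ decomposes into $m(\lambda',\lambda)$ reduced branches, one for each witness point $h_k$, each cut out with multiplicity one on the image of $\varPsi_{d,r}$. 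Your identification of the pullback divisor with $(p_1)_{*}[\mathcal{I}_\lambda]$ silently assumes this local branch structure. The overall architecture (incidence correspondence, fibre count giving $m(\lambda',\lambda)$, projection formula) is a sensible plan and is consistent with the numerical evidence, but as written it is a strategy for a proof of an open conjecture, not a proof.
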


\begin{remark}\label{polarDegrees}  
As an immediate consequence of Conjecture~\ref{conje}, together with the Oeding's formula \eqref{luke},
we deduce an explicit formula for the polar degrees $\delta_j(\Delta_{\lambda})$ of any coincident root locus $\Delta_\lambda$ (see Subsection~\ref{HAS-section}). 
In fact, if $j> n - m_1(\lambda)$ (where $n = |\lambda|$) then $\delta_j(\Delta_{\lambda}) = 0$; while  if $j\leq n - m_1(\lambda)$  and since $\psi_{d,r}$ is defined by forms of degree $d-r+1=n-j+1$,
then Conjecture~\ref{conje} gives 
\begin{align}
 \delta_j(\Delta_{\lambda}) 
&= \frac{1}{n-j+1} \sum_{\lambda'\in \mathcal{F}_j(\lambda)} {m(\lambda',\lambda)}\,  \frac{(n+1)!}{m_1(\lambda')! m_2(\lambda')! \cdots}\lambda_1' \lambda_2'\cdots \lambda_n' \\
\nonumber &=\frac{n+1}{n-j+1} \sum_{\lambda'\in \mathcal{F}_j(\lambda)} {m(\lambda',\lambda)} \deg(\Delta_{\lambda'}), 
\end{align}
where we denote by
$m_j(\lambda')$ the number of $j$ in the partition $\lambda'$. 
\end{remark}

\begin{remark}\label{remVerifiche}
We have verified {the validity} of the Conjecture~\ref{conje} for all partitions $\lambda$ of $r\leq 7$.
This has been done using the software
{\sc Macaulay2} \cite{macaulay2} with the packages \emph{CoincidentRootLoci} \cite{CRL-package} and 
\emph{Resultants} \cite{Resultants-package}.
We write out in Table~\ref{tabella-conj-fin}  the explicit formulas for these partitions. 

For the reader who wants to verify other cases, 
we also point out that
the  
polar degrees of coincident root loci can be calculated using 
the aforementioned packages; for low dimensions $r\leq7$, see also \cite[3rd col. of Table~1]{LeeSturmfels}.
\begin{table}[htbp]
\centering 
\footnotesize 
\tabcolsep=0.7pt 
\begin{adjustbox}{width=\textwidth}
\begin{tabular}{ll}
$\lambda = (3)$ & \begin{tabular}{l} $\overline{\varPsi_{4,3}^{-1}(\mathrm{CH}_0(\Delta_{\lambda}))} = {(\Delta_{(4)})^{\vee}} $,\ $\overline{\varPsi_{3,3}^{-1}(\mathrm{CH}_1(\Delta_{\lambda}))} = {(\Delta_{(3)})^{\vee}} $\end{tabular} \\ $\lambda = (2,1)$ & \begin{tabular}{l} $\overline{\varPsi_{5,3}^{-1}(\mathrm{CH}_0(\Delta_{\lambda}))} = {(\Delta_{(3,2)})^{\vee}} $,\ $\overline{\varPsi_{4,3}^{-1}(\mathrm{CH}_1(\Delta_{\lambda}))} = 2\cdot{(\Delta_{(2,2)})^{\vee}} $\end{tabular} \\ $\lambda = (4)$ & \begin{tabular}{l} $\overline{\varPsi_{5,4}^{-1}(\mathrm{CH}_0(\Delta_{\lambda}))} = {(\Delta_{(5)})^{\vee}} $,\ $\overline{\varPsi_{4,4}^{-1}(\mathrm{CH}_1(\Delta_{\lambda}))} = {(\Delta_{(4)})^{\vee}} $\end{tabular} \\ $\lambda = (3,1)$ & \begin{tabular}{l} $\overline{\varPsi_{6,4}^{-1}(\mathrm{CH}_0(\Delta_{\lambda}))} = {(\Delta_{(4,2)})^{\vee}} $,\ $\overline{\varPsi_{5,4}^{-1}(\mathrm{CH}_1(\Delta_{\lambda}))} = {(\Delta_{(3,2)})^{\vee}} $\end{tabular} \\ $\lambda = (2,2)$ & \begin{tabular}{l} $\overline{\varPsi_{6,4}^{-1}(\mathrm{CH}_0(\Delta_{\lambda}))} = {(\Delta_{(3,3)})^{\vee}} $,\ $\overline{\varPsi_{5,4}^{-1}(\mathrm{CH}_1(\Delta_{\lambda}))} = {(\Delta_{(3,2)})^{\vee}} $,\ $\overline{\varPsi_{4,4}^{-1}(\mathrm{CH}_2(\Delta_{\lambda}))} = {(\Delta_{(2,2)})^{\vee}} $\end{tabular} \\ $\lambda = (2,1,1)$ & \begin{tabular}{l} $\overline{\varPsi_{7,4}^{-1}(\mathrm{CH}_0(\Delta_{\lambda}))} = {(\Delta_{(3,2,2)})^{\vee}} $,\ $\overline{\varPsi_{6,4}^{-1}(\mathrm{CH}_1(\Delta_{\lambda}))} = 3\cdot{(\Delta_{(2,2,2)})^{\vee}} $\end{tabular} \\ $\lambda = (5)$ & \begin{tabular}{l} $\overline{\varPsi_{6,5}^{-1}(\mathrm{CH}_0(\Delta_{\lambda}))} = {(\Delta_{(6)})^{\vee}} $,\ $\overline{\varPsi_{5,5}^{-1}(\mathrm{CH}_1(\Delta_{\lambda}))} = {(\Delta_{(5)})^{\vee}} $\end{tabular} \\ $\lambda = (4,1)$ & \begin{tabular}{l} $\overline{\varPsi_{7,5}^{-1}(\mathrm{CH}_0(\Delta_{\lambda}))} = {(\Delta_{(5,2)})^{\vee}} $,\ $\overline{\varPsi_{6,5}^{-1}(\mathrm{CH}_1(\Delta_{\lambda}))} = {(\Delta_{(4,2)})^{\vee}} $\end{tabular} \\ $\lambda = (3,2)$ & \begin{tabular}{l} $\overline{\varPsi_{7,5}^{-1}(\mathrm{CH}_0(\Delta_{\lambda}))} = {(\Delta_{(4,3)})^{\vee}} $,\ $\overline{\varPsi_{6,5}^{-1}(\mathrm{CH}_1(\Delta_{\lambda}))} = {(\Delta_{(4,2)})^{\vee}} \bigcup 2\cdot{(\Delta_{(3,3)})^{\vee}} $,\ $\overline{\varPsi_{5,5}^{-1}(\mathrm{CH}_2(\Delta_{\lambda}))} = {(\Delta_{(3,2)})^{\vee}} $\end{tabular} \\ $\lambda = (3,1,1)$ & \begin{tabular}{l} $\overline{\varPsi_{8,5}^{-1}(\mathrm{CH}_0(\Delta_{\lambda}))} = {(\Delta_{(4,2,2)})^{\vee}} $,\ $\overline{\varPsi_{7,5}^{-1}(\mathrm{CH}_1(\Delta_{\lambda}))} = {(\Delta_{(3,2,2)})^{\vee}} $\end{tabular} \\ $\lambda = (2,2,1)$ & \begin{tabular}{l} $\overline{\varPsi_{8,5}^{-1}(\mathrm{CH}_0(\Delta_{\lambda}))} = {(\Delta_{(3,3,2)})^{\vee}} $,\ $\overline{\varPsi_{7,5}^{-1}(\mathrm{CH}_1(\Delta_{\lambda}))} = 2\cdot{(\Delta_{(3,2,2)})^{\vee}} $,\ $\overline{\varPsi_{6,5}^{-1}(\mathrm{CH}_2(\Delta_{\lambda}))} = 3\cdot{(\Delta_{(2,2,2)})^{\vee}} $\end{tabular} \\ $\lambda = (2,1,1,1)$ & \begin{tabular}{l} $\overline{\varPsi_{9,5}^{-1}(\mathrm{CH}_0(\Delta_{\lambda}))} = {(\Delta_{(3,2,2,2)})^{\vee}} $,\ $\overline{\varPsi_{8,5}^{-1}(\mathrm{CH}_1(\Delta_{\lambda}))} = 4\cdot{(\Delta_{(2,2,2,2)})^{\vee}} $\end{tabular} \\ $\lambda = (6)$ & \begin{tabular}{l} $\overline{\varPsi_{7,6}^{-1}(\mathrm{CH}_0(\Delta_{\lambda}))} = {(\Delta_{(7)})^{\vee}} $,\ $\overline{\varPsi_{6,6}^{-1}(\mathrm{CH}_1(\Delta_{\lambda}))} = {(\Delta_{(6)})^{\vee}} $\end{tabular} \\ $\lambda = (5,1)$ & \begin{tabular}{l} $\overline{\varPsi_{8,6}^{-1}(\mathrm{CH}_0(\Delta_{\lambda}))} = {(\Delta_{(6,2)})^{\vee}} $,\ $\overline{\varPsi_{7,6}^{-1}(\mathrm{CH}_1(\Delta_{\lambda}))} = {(\Delta_{(5,2)})^{\vee}} $\end{tabular} \\ $\lambda = (4,2)$ & \begin{tabular}{l} $\overline{\varPsi_{8,6}^{-1}(\mathrm{CH}_0(\Delta_{\lambda}))} = {(\Delta_{(5,3)})^{\vee}} $,\ $\overline{\varPsi_{7,6}^{-1}(\mathrm{CH}_1(\Delta_{\lambda}))} = {(\Delta_{(5,2)})^{\vee}} \bigcup {(\Delta_{(4,3)})^{\vee}} $,\ $\overline{\varPsi_{6,6}^{-1}(\mathrm{CH}_2(\Delta_{\lambda}))} = {(\Delta_{(4,2)})^{\vee}} $\end{tabular} \\ $\lambda = (4,1,1)$ & \begin{tabular}{l} $\overline{\varPsi_{9,6}^{-1}(\mathrm{CH}_0(\Delta_{\lambda}))} = {(\Delta_{(5,2,2)})^{\vee}} $,\ $\overline{\varPsi_{8,6}^{-1}(\mathrm{CH}_1(\Delta_{\lambda}))} = {(\Delta_{(4,2,2)})^{\vee}} $\end{tabular} \\ $\lambda = (3,3)$ & \begin{tabular}{l} $\overline{\varPsi_{8,6}^{-1}(\mathrm{CH}_0(\Delta_{\lambda}))} = {(\Delta_{(4,4)})^{\vee}} $,\ $\overline{\varPsi_{7,6}^{-1}(\mathrm{CH}_1(\Delta_{\lambda}))} = {(\Delta_{(4,3)})^{\vee}} $,\ $\overline{\varPsi_{6,6}^{-1}(\mathrm{CH}_2(\Delta_{\lambda}))} = {(\Delta_{(3,3)})^{\vee}} $\end{tabular} \\ $\lambda = (3,2,1)$ & \begin{tabular}{l} $\overline{\varPsi_{9,6}^{-1}(\mathrm{CH}_0(\Delta_{\lambda}))} = {(\Delta_{(4,3,2)})^{\vee}} $,\ $\overline{\varPsi_{8,6}^{-1}(\mathrm{CH}_1(\Delta_{\lambda}))} = 2\cdot{(\Delta_{(4,2,2)})^{\vee}} \bigcup 2\cdot{(\Delta_{(3,3,2)})^{\vee}} $,\ $\overline{\varPsi_{7,6}^{-1}(\mathrm{CH}_2(\Delta_{\lambda}))} = 2\cdot{(\Delta_{(3,2,2)})^{\vee}} $\end{tabular} \\ $\lambda = (3,1,1,1)$ & \begin{tabular}{l} $\overline{\varPsi_{10,6}^{-1}(\mathrm{CH}_0(\Delta_{\lambda}))} = {(\Delta_{(4,2,2,2)})^{\vee}} $,\ $\overline{\varPsi_{9,6}^{-1}(\mathrm{CH}_1(\Delta_{\lambda}))} = {(\Delta_{(3,2,2,2)})^{\vee}} $\end{tabular} \\ 
$\lambda = (2,2,2)$ & \begin{tabular}{l} $\overline{\varPsi_{9,6}^{-1}(\mathrm{CH}_0(\Delta_{\lambda}))} = {(\Delta_{(3,3,3)})^{\vee}} $,\ $\overline{\varPsi_{8,6}^{-1}(\mathrm{CH}_1(\Delta_{\lambda}))} = {(\Delta_{(3,3,2)})^{\vee}} $,\ $\overline{\varPsi_{7,6}^{-1}(\mathrm{CH}_2(\Delta_{\lambda}))} = {(\Delta_{(3,2,2)})^{\vee}} $,\ $\overline{\varPsi_{6,6}^{-1}(\mathrm{CH}_3(\Delta_{\lambda}))} = {(\Delta_{(2,2,2)})^{\vee}} $\end{tabular} \\ $\lambda = (2,2,1,1)$ & \begin{tabular}{l} $\overline{\varPsi_{10,6}^{-1}(\mathrm{CH}_0(\Delta_{\lambda}))} = {(\Delta_{(3,3,2,2)})^{\vee}} $,\ $\overline{\varPsi_{9,6}^{-1}(\mathrm{CH}_1(\Delta_{\lambda}))} = 3\cdot{(\Delta_{(3,2,2,2)})^{\vee}} $,\ $\overline{\varPsi_{8,6}^{-1}(\mathrm{CH}_2(\Delta_{\lambda}))} = 6\cdot{(\Delta_{(2,2,2,2)})^{\vee}} $\end{tabular} \\ $\lambda = (2,1,1,1,1)$ & \begin{tabular}{l} $\overline{\varPsi_{11,6}^{-1}(\mathrm{CH}_0(\Delta_{\lambda}))} = {(\Delta_{(3,2,2,2,2)})^{\vee}} $,\ $\overline{\varPsi_{10,6}^{-1}(\mathrm{CH}_1(\Delta_{\lambda}))} = 5\cdot{(\Delta_{(2,2,2,2,2)})^{\vee}} $\end{tabular} \\ $\lambda = (7)$ & \begin{tabular}{l} $\overline{\varPsi_{8,7}^{-1}(\mathrm{CH}_0(\Delta_{\lambda}))} = {(\Delta_{(8)})^{\vee}} $,\ $\overline{\varPsi_{7,7}^{-1}(\mathrm{CH}_1(\Delta_{\lambda}))} = {(\Delta_{(7)})^{\vee}} $\end{tabular} \\ $\lambda = (6,1)$ & \begin{tabular}{l} $\overline{\varPsi_{9,7}^{-1}(\mathrm{CH}_0(\Delta_{\lambda}))} = {(\Delta_{(7,2)})^{\vee}} $,\ $\overline{\varPsi_{8,7}^{-1}(\mathrm{CH}_1(\Delta_{\lambda}))} = {(\Delta_{(6,2)})^{\vee}} $\end{tabular} \\ $\lambda = (5,2)$ & \begin{tabular}{l} $\overline{\varPsi_{9,7}^{-1}(\mathrm{CH}_0(\Delta_{\lambda}))} = {(\Delta_{(6,3)})^{\vee}} $,\ $\overline{\varPsi_{8,7}^{-1}(\mathrm{CH}_1(\Delta_{\lambda}))} = {(\Delta_{(6,2)})^{\vee}} \bigcup {(\Delta_{(5,3)})^{\vee}} $,\ $\overline{\varPsi_{7,7}^{-1}(\mathrm{CH}_2(\Delta_{\lambda}))} = {(\Delta_{(5,2)})^{\vee}} $\end{tabular} \\ $\lambda = (5,1,1)$ & \begin{tabular}{l} $\overline{\varPsi_{10,7}^{-1}(\mathrm{CH}_0(\Delta_{\lambda}))} = {(\Delta_{(6,2,2)})^{\vee}} $,\ $\overline{\varPsi_{9,7}^{-1}(\mathrm{CH}_1(\Delta_{\lambda}))} = {(\Delta_{(5,2,2)})^{\vee}} $\end{tabular} \\ $\lambda = (4,3)$ & \begin{tabular}{l} $\overline{\varPsi_{9,7}^{-1}(\mathrm{CH}_0(\Delta_{\lambda}))} = {(\Delta_{(5,4)})^{\vee}} $,\ $\overline{\varPsi_{8,7}^{-1}(\mathrm{CH}_1(\Delta_{\lambda}))} = {(\Delta_{(5,3)})^{\vee}} \bigcup 2\cdot{(\Delta_{(4,4)})^{\vee}} $,\ $\overline{\varPsi_{7,7}^{-1}(\mathrm{CH}_2(\Delta_{\lambda}))} = {(\Delta_{(4,3)})^{\vee}} $\end{tabular} \\ $\lambda = (4,2,1)$ & \begin{tabular}{l} $\overline{\varPsi_{10,7}^{-1}(\mathrm{CH}_0(\Delta_{\lambda}))} = {(\Delta_{(5,3,2)})^{\vee}} $,\ $\overline{\varPsi_{9,7}^{-1}(\mathrm{CH}_1(\Delta_{\lambda}))} = 2\cdot{(\Delta_{(5,2,2)})^{\vee}} \bigcup {(\Delta_{(4,3,2)})^{\vee}} $,\ $\overline{\varPsi_{8,7}^{-1}(\mathrm{CH}_2(\Delta_{\lambda}))} = 2\cdot{(\Delta_{(4,2,2)})^{\vee}} $\end{tabular} \\ $\lambda = (4,1,1,1)$ & \begin{tabular}{l} $\overline{\varPsi_{11,7}^{-1}(\mathrm{CH}_0(\Delta_{\lambda}))} = {(\Delta_{(5,2,2,2)})^{\vee}} $,\ $\overline{\varPsi_{10,7}^{-1}(\mathrm{CH}_1(\Delta_{\lambda}))} = {(\Delta_{(4,2,2,2)})^{\vee}} $\end{tabular} \\ $\lambda = (3,3,1)$ & \begin{tabular}{l} $\overline{\varPsi_{10,7}^{-1}(\mathrm{CH}_0(\Delta_{\lambda}))} = {(\Delta_{(4,4,2)})^{\vee}} $,\ $\overline{\varPsi_{9,7}^{-1}(\mathrm{CH}_1(\Delta_{\lambda}))} = {(\Delta_{(4,3,2)})^{\vee}} $,\ $\overline{\varPsi_{8,7}^{-1}(\mathrm{CH}_2(\Delta_{\lambda}))} = {(\Delta_{(3,3,2)})^{\vee}} $\end{tabular} \\ $\lambda = (3,2,2)$ & \begin{tabular}{l} $\overline{\varPsi_{10,7}^{-1}(\mathrm{CH}_0(\Delta_{\lambda}))} = {(\Delta_{(4,3,3)})^{\vee}} $,\ $\overline{\varPsi_{9,7}^{-1}(\mathrm{CH}_1(\Delta_{\lambda}))} = {(\Delta_{(4,3,2)})^{\vee}} \bigcup 3\cdot{(\Delta_{(3,3,3)})^{\vee}} $,\ $\overline{\varPsi_{8,7}^{-1}(\mathrm{CH}_2(\Delta_{\lambda}))} = 2\cdot{(\Delta_{(3,3,2)})^{\vee}} \bigcup {(\Delta_{(4,2,2)})^{\vee}} $,\ $\overline{\varPsi_{7,7}^{-1}(\mathrm{CH}_3(\Delta_{\lambda}))} = {(\Delta_{(3,2,2)})^{\vee}} $\end{tabular} \\ $\lambda = (3,2,1,1)$ & \begin{tabular}{l} $\overline{\varPsi_{11,7}^{-1}(\mathrm{CH}_0(\Delta_{\lambda}))} = {(\Delta_{(4,3,2,2)})^{\vee}} $,\ $\overline{\varPsi_{10,7}^{-1}(\mathrm{CH}_1(\Delta_{\lambda}))} = 3\cdot{(\Delta_{(4,2,2,2)})^{\vee}} \bigcup 2\cdot{(\Delta_{(3,3,2,2)})^{\vee}} $,\ $\overline{\varPsi_{9,7}^{-1}(\mathrm{CH}_2(\Delta_{\lambda}))} = 3\cdot{(\Delta_{(3,2,2,2)})^{\vee}} $\end{tabular} \\ $\lambda = (3,1,1,1,1)$ & \begin{tabular}{l} $\overline{\varPsi_{12,7}^{-1}(\mathrm{CH}_0(\Delta_{\lambda}))} = {(\Delta_{(4,2,2,2,2)})^{\vee}} $,\ $\overline{\varPsi_{11,7}^{-1}(\mathrm{CH}_1(\Delta_{\lambda}))} = {(\Delta_{(3,2,2,2,2)})^{\vee}} $\end{tabular} \\ $\lambda = (2,2,2,1)$ & \begin{tabular}{l} $\overline{\varPsi_{11,7}^{-1}(\mathrm{CH}_0(\Delta_{\lambda}))} = {(\Delta_{(3,3,3,2)})^{\vee}} $,\ $\overline{\varPsi_{10,7}^{-1}(\mathrm{CH}_1(\Delta_{\lambda}))} = 2\cdot{(\Delta_{(3,3,2,2)})^{\vee}} $,\ $\overline{\varPsi_{9,7}^{-1}(\mathrm{CH}_2(\Delta_{\lambda}))} = 3\cdot{(\Delta_{(3,2,2,2)})^{\vee}} $,\ $\overline{\varPsi_{8,7}^{-1}(\mathrm{CH}_3(\Delta_{\lambda}))} = 4\cdot{(\Delta_{(2,2,2,2)})^{\vee}} $\end{tabular} \\ $\lambda = (2,2,1,1,1)$ & \begin{tabular}{l} $\overline{\varPsi_{12,7}^{-1}(\mathrm{CH}_0(\Delta_{\lambda}))} = {(\Delta_{(3,3,2,2,2)})^{\vee}} $,\ 
$\overline{\varPsi_{11,7}^{-1}(\mathrm{CH}_1(\Delta_{\lambda}))} = 4\cdot{(\Delta_{(3,2,2,2,2)})^{\vee}} $,\ $\overline{\varPsi_{10,7}^{-1}(\mathrm{CH}_2(\Delta_{\lambda}))} = 10\cdot{(\Delta_{(2,2,2,2,2)})^{\vee}} $\end{tabular} \\ $\lambda = (2,1,1,1,1,1)$ & \begin{tabular}{l} $\overline{\varPsi_{13,7}^{-1}(\mathrm{CH}_0(\Delta_{\lambda}))} = {(\Delta_{(3,2,2,2,2,2)})^{\vee}} $,\ $\overline{\varPsi_{12,7}^{-1}(\mathrm{CH}_1(\Delta_{\lambda}))} = 6\cdot{(\Delta_{(2,2,2,2,2,2)})^{\vee}} $\end{tabular} 
\end{tabular}
\end{adjustbox}
\caption{\footnotesize Explicit formulas of Conjecture~\ref{conje} (and Corollary~\ref{pippo}) for all partitions $\lambda$ of $r$ with $3\leq r\leq 7$ and with $j\leq |\lambda| - m_1(\lambda)$.}
\label{tabella-conj-fin}
\end{table}
\end{remark}

We illustrate the formula of Conjecture~\ref{conje} with some examples.
\begin{example}
Let $\lambda=(2,1^{n-1})$. Then $\mathcal{F}(\lambda)=\{(1^{n})\}$ and $m((1^n),\lambda) = n$. 
In this special case, Conjecture~\ref{conje} gives a formula which was proved in \cite[p.~514]{LeeSturmfels}, that is
 $$
   \overline{\varPsi_{2n,n+1}^{-1}(\mathrm{CH}_1(\Delta_{(2,1^{n-1})}))} = {n}(\Delta_{(2^{n})})^{\vee}.
 $$
\end{example}

\begin{example} 
Take the partition
 $\lambda=(4,3,2,2)$ and $j=1$, hence $n=4$, $r=11$, and $d=14$.
We have $\mathcal{F}_j(\lambda)=\{(4,3,2,1),(4,2,2,2),(3,3,2,2)\}$, with
  $m((4,3,2,1),\lambda)=1$, $m((4,2,2,2),\lambda)=3$, and $m((3,3,2,2),\lambda)=2$.
The apolar map $\varPsi_{14,11} :\PP^{14}\dashrightarrow \Gr(7,11)\subset\PP^{494}$
is defined by forms of degree $4$, and 
the degree of the hypersurface $\mathrm{CH}_1(\Delta_{(4,3,2,2}))$ 
is $1740$ (calculated with {\sc Macaulay2}).
From Corollary~\ref{pippo}
it follows that $\overline{\varPsi_{14,11}^{-1}(\mathrm{CH}_1(\Delta_{(4,3,2,2)}))}$
has the following 
three components:
\[
 (\Delta_{(5,4,3,2)})^{\vee} ,\quad 
 (\Delta_{(5,3,3,3)})^{\vee} , \quad 
 (\Delta_{(4,4,3,3)})^{\vee}
\]
which have degrees, respectively, $2880$, $640$, and $1080$. 
Conjecture~\ref{conje} predicts that the first component 
occurs with multiplicity $1$, the second one with multiplicity $3$, and the 
last one with multiplicity $2$. 
This is indeed the case, since we have  
$1\cdot 2880 + 3\cdot 640 + 2\cdot 1080 = 4\cdot 1740$.
\end{example}

\section{Algebraic boundaries among typical ranks for binary forms}\label{alg-boun}
Now, thanks to the results of the previous section, we are able to describe all the algebraic boundaries for binary forms of arbitrary degree $d$.
We will consider the case of odd degree 
in Theorem~\ref{teoremaOdd}, and the case 
of even degree in Theorem~\ref{casopari}.

First, we state a preliminary result of differential topology, whose proof relies on
the property of stability of transverse intersection, see \cite[Chapter 2, Section 5-6]{Guillemin-Pollack}.
We sketch the proof for the reader's convenience.
\begin{lemma}\label{GuilleminPollak}
If $X\subset\PP^N_{\mathbb{R}}$ is an irreducible variety of codimension $c$, and 
  $\varepsilon\in[0,1)\mapsto \Pi_{\varepsilon}\in\mathbb{G}(n,\PP^N_{\mathbb{R}})$
  is 
  a one-parameter smooth family 
  of $n$-dimensional linear subspaces such that $\Pi_0$ 
  meets $X$ at a point $p$ and
  $\Pi_{\varepsilon}\cap X=\emptyset$ for each $\varepsilon>0$,  then 
  we have $\dim(T_p(X)\cap \Pi_{0}) \geq n - c + 1 $.
  \end{lemma}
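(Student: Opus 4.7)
The plan is to argue by contradiction using the Stability of Transverse Intersection from \cite[Ch.~2, §5--6]{Guillemin-Pollack}. The key observation is dimensional: if $T_p X$ and $\Pi_0$ are in ``general position'' inside the ambient tangent space $T_p \PP^N_{\mathbb{R}}$ (which has dimension $N$), then their intersection has the expected dimension $(N-c)+n-N = n-c$. So the inequality $\dim(T_p X \cap \Pi_0) \geq n-c+1$ is exactly the statement that $\Pi_0$ fails to meet $X$ transversally at $p$ as smooth submanifolds.

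First I would reduce to the case in which $p$ is a smooth real point of $X$. If $p$ is singular, then the Zariski tangent space has dimension strictly greater than $N-c$, and the desired inequality is automatic from the trivial bound $\dim(T_p X \cap \Pi_0) \geq \dim T_p X + n - N$. With $p$ a smooth point, a neighborhood of $p$ in the real locus $X(\mathbb{R})$ is a genuine $C^\infty$ submanifold of $\PP^N_{\mathbb{R}}$ of codimension $c$, whose tangent space at $p$ coincides with the Zariski tangent space $T_p X$ (viewed over $\mathbb{R}$).

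Next I would suppose for contradiction that $\dim(T_p X \cap \Pi_0) = n-c$. Then $T_p X + \Pi_0 = T_p\PP^N_{\mathbb{R}}$, i.e.\ $\Pi_0$ meets the submanifold $X$ transversally at $p$ in the differential-topological sense. The Stability Theorem says that transverse intersections of compact-enough pieces of smooth submanifolds are preserved by arbitrary small $C^\infty$ deformations. Applying this to the smooth one-parameter family $\varepsilon\mapsto \Pi_\varepsilon$, I would obtain a $\delta>0$ and a neighborhood $U$ of $p$ in $\PP^N_{\mathbb{R}}$ such that, for every $\varepsilon\in[0,\delta)$, the intersection $\Pi_\varepsilon\cap X\cap U$ is a non-empty smooth submanifold of dimension $n-c$. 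This directly contradicts the hypothesis $\Pi_\varepsilon\cap X=\emptyset$ for $\varepsilon>0$, so the inequality must hold.

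The main obstacle is the interface between the algebraic and the differential viewpoints: one must make sure that the Zariski tangent space at a smooth real point of $X$ genuinely realises the tangent plane to the smooth submanifold $X(\mathbb{R})\cap U$, and that algebraic transversality with the real-linear subspace $\Pi_0$ amounts to differential-geometric transversality in the sense needed to invoke the Stability Theorem. Once this local identification is pinned down, the remainder of the argument is a direct citation of \cite[Ch.~2, §5--6]{Guillemin-Pollack}.
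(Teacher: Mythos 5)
Your proposal is correct and follows essentially the same route as the paper's own sketch: reduce to the case where $p$ is a smooth point and the tangent intersection has the expected dimension $n-c$, then derive a contradiction from the stability of transverse intersections under the smooth deformation $\varepsilon\mapsto\Pi_\varepsilon$, citing the same passage of Guillemin--Pollack. You simply spell out a bit more explicitly the trivial singular case and the identification of the Zariski tangent space with the differential-geometric one, which the paper leaves implicit.
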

  \begin{proof}[Sketch of the proof]
  We can assume that $p$ is a smooth point of $X$ 
  and that the dimension of the intersection $\Pi_0\cap X$ at $p$ is $n-c\geq 0$,
   since otherwise the claim is trivial. Under these assumptions,
   we have to show that $X$ and $\Pi_0$ meet non-transversally at $p$,
   that is 
   $\dim\langle T_p(X) , \Pi_0\rangle < N$.
   If, by contradiction, they meet transversally at $p$,
   since the property to intersect transversally 
   is stable under small deformations, we would have that also 
   $X$ and $\Pi_{\varepsilon}$  meet transversally and $X\cap \Pi_{\varepsilon}\neq \emptyset$, for any small $\varepsilon>0$.
  \end{proof}

\begin{theorem}\label{teoremaOdd}  
  If $d = 2 k -1$ is odd, then
  the algebraic boundaries for degree $d$ binary real forms 
  satisfy the following:
\begin{enumerate}
\item\label{partLeeSt} If $i=0$ then \[\partial_{\textrm{alg}}(\mathcal{R}_{d,k+i}) = (\Delta_{(3,2^{k-2})})^\vee . \] 
\item If $1\leq i\leq k-2$ then
  \[\partial_{\textrm{alg}}(\mathcal{R}_{d,k+i}) \subseteq \bigcup_{\rho} (\Delta_{\rho})^{\vee} , \] 
where 
$\rho$ runs among the partitions of $d$ 
with all parts greater than or equal to $2$
and of length between  $k-1-i$ and  $k-1$;
in particular, 
  \[\partial_{\textrm{alg}}(\mathcal{R}_{d,k+1}) \subseteq (\Delta_{(3,2^{k-2})})^\vee\cup
  (\Delta_{(4,3,2^{k-4})})^\vee\cup (\Delta_{(5,2^{k-3})})^\vee\cup (\Delta_{(3,3,3,2^{k-5})})^\vee .\]
\item\label{causaRe} If $i = k-1$, then \[\partial_{\textrm{alg}}(\mathcal{R}_{d,k+i}) = (\Delta_{(d)})^\vee . \]
  \end{enumerate}
\end{theorem}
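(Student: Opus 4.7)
My plan: cases (1) and (3) follow by citation of known results. Case (1) ($i=0$) is the odd-degree minimum-rank formula of \cite{LeeSturmfels} recalled in the introduction. Case (3) ($r=d$) follows from \cite{ComonOttaviani,CausaRe}: $\mathcal{R}_{d,d}$ is the locus of real-rooted forms, so its topological boundary is the real discriminant, whose Zariski closure in $\PP(\CC[x,y]_d)$ is the complex discriminant hypersurface $(\Delta_{(d)})^\vee$ (the dual of the rational normal curve). For case (2), fix $1\le i\le k-2$ and $r=k+i$; pick a smooth point $f_0$ of a codimension-$1$ component of $\partial(\mathcal{R}_{d,r})$, lying outside the indeterminacy locus of $\varPsi_{d,r}$ (of codimension $\ge 2$ since $r\ge k+1$), together with a smooth real arc $\{f_\varepsilon\}$ with $f_\varepsilon\in\mathcal{R}_{d,r}$ for $\varepsilon>0$, and set $\Pi_\varepsilon=\varPsi_{d,r}(f_\varepsilon)\in\GG(2i,k+i)(\RR)$. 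By Apolarity (Lemma~\ref{apolarity}) each $\Pi_\varepsilon$ (for $\varepsilon>0$) contains a real-rooted form $g_\varepsilon$; a subsequence converges to $g_0\in\Pi_0$ with all real roots. Openness of real-rootedness and continuity of $\varPsi_{d,r}$ at $f_0$ rule out $g_0$ being simple real-rooted, since otherwise a full neighborhood of $f_0$ would lie inside $\mathcal{R}_{d,r}$, contradicting $f_0\in\partial(\mathcal{R}_{d,r})$. Hence $g_0\in\Delta_\lambda$ for some partition $\lambda$ of $r$ with $n:=|\lambda|<r$.

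Set $j:=n-(k-1-i)$; the identity $2i=r-n-1+j$ then places $\CH_j(\Delta_\lambda)$ in the correct Grassmannian $\GG(2i,k+i)$. The main claim is $\Pi_0\in\CH_j(\Delta_\lambda)$: as $\varepsilon\downarrow 0$, the real-rooted region of $\Pi_\varepsilon(\RR)$ (bounded by the real part of $\Delta_{(2,1^{r-2})}$) collapses onto the single point $g_0$, forcing the family $\Pi_\varepsilon$ to approach $\Delta_\lambda$ non-transversely at $g_0$; a careful application of Lemma~\ref{GuilleminPollak} at the smooth real point $g_0\in\Delta_\lambda(\RR)$ then gives $\dim(\Pi_0\cap T_{g_0}\Delta_\lambda)\ge j$, the defining inequality of $\CH_j(\Delta_\lambda)$. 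Because $f_0$ lies on a codimension-$1$ component of the boundary, Corollary~\ref{soluzione-del-problemino} forces $j\le n-m_1(\lambda)$, and combining this with the constraint that $\lambda$ is a partition of $r=k+i$ of length $n$, one deduces $n\le k-1$, hence $n\in\{k-1-i,\ldots,k-1\}$. Corollary~\ref{pippo} now produces
\[
 f_0 \in \overline{\varPsi_{d,r}^{-1}\bigl(\CH_j(\Delta_\lambda)\bigr)} \;=\; \bigcup_{\lambda'\in\mathcal{F}_j(\lambda)} \bigl(\Delta_{(\lambda'_1+1,\ldots,\lambda'_n+1)}\bigr)^\vee,
\]
and each resulting partition $\rho=(\lambda'_1+1,\ldots,\lambda'_n+1)$ of $d=2k-1$ has all parts $\ge 2$ and length $n\in\{k-1-i,\ldots,k-1\}$, yielding the general inclusion. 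For $i=1$ a direct enumeration (the unique partition $(3,2^{k-2})$ of length $k-1$, together with the three partitions $(5,2^{k-3})$, $(4,3,2^{k-4})$, $(3,3,3,2^{k-5})$ of length $k-2$) produces the four hypersurfaces listed in the statement.

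The main obstacle is the rigorous extraction of the complex-algebraic tangency $\Pi_0\in\CH_j(\Delta_\lambda)$ from the real-topological collapse of the real-rooted region in $\Pi_\varepsilon$, i.e., the correct application of Lemma~\ref{GuilleminPollak} to the real part $\Delta_\lambda(\RR)$ at the smooth real point $g_0$, and ensuring that the combinatorial type of $\lambda$ satisfies both $j\ge 0$ and $m_1(\lambda)\le k-1-i$ so that the pullback is genuinely of codimension $1$. Non-generic boundary points (where $g_0$ sits in a singular stratum of $\Delta_\lambda$, or where several real limits compete along different sequences $f_\varepsilon$) must be handled by stratifying $\partial(\mathcal{R}_{d,r})$ by the combinatorial type of $\lambda$ and verifying that strata whose $\lambda$ falls outside the admissible range contribute only to pullbacks of codimension $>1$ and thus do not produce hypersurface components of $\partial_{\mathrm{alg}}(\mathcal{R}_{d,r})$.
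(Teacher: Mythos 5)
Your strategy coincides with the paper's: transport an arc crossing the boundary through the apolar map $\varPsi_{d,k+i}$, detect real-rootedness inside the apolar $2i$-planes via Lemma~\ref{apolarity}, extract a tangency with a stratum $\Delta_\lambda$ of the discriminant via Lemma~\ref{GuilleminPollak} so that $\Pi_0\in\CH_j(\Delta_\lambda)$, and convert the codimension-one pullbacks into dual varieties of coincident root loci by Corollaries~\ref{soluzione-del-problemino} and~\ref{pippo}. Your numerology is correct: $j=n-(k-1-i)$ does place $\CH_j(\Delta_\lambda)$ in $\GG(2i,k+i)$, the hypersurface condition $m_1(\lambda)\le k-1-i$ does force $k-1-i\le n\le k-1$, and the four partitions listed for $i=1$ are the right ones.

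There is, however, one genuine gap. The step ``openness of real-rootedness \dots rules out $g_0$ being simple real-rooted, since otherwise a full neighborhood of $f_0$ would lie inside $\mathcal{R}_{d,r}$'' is invalid on part of the boundary. If every plane near $\Pi_0$ contains a squarefree real-rooted form, you only conclude $\rk_{\RR}(f)\le r$ for all $f$ near $f_0$, not $\rk_{\RR}(f)=r$; this is perfectly compatible with $f_0\in\partial(\mathcal{R}_{d,r})$ when the component in question separates $\mathcal{R}_{d,r}$ from a region of \emph{lower} typical rank. Such components exist (for instance $(\Delta_{(3,2^{k-2})})^{\vee}$ appears in $\partial_{\textrm{alg}}(\mathcal{R}_{d,k+1})$ as the wall against $\mathcal{R}_{d,k}$), and on them the limit $g_0$ can be simple real-rooted, so no tangency with the discriminant is produced at level $r=k+i$ and your argument yields nothing for them. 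The paper circumvents this by running the argument only for the part of $\partial(\mathcal{R}_{d,k+i})$ separating $\mathcal{R}_{d,k+i}$ from $\mathcal{R}_{d,k+i+1}\cup\cdots\cup\mathcal{R}_{d,d}$ --- there the real-rooted locus of $\Pi_\varepsilon$ is nonempty on one side and empty on the other, which is exactly the hypothesis of Lemma~\ref{GuilleminPollak} --- and then observing that the remaining components lie in $\bigcup_{j=1}^{i}\partial(\mathcal{R}_{d,k+i-j})$, which are handled by induction on $i$ with base case $i=0$ from part~(1). Your closing paragraph about stratifying by the combinatorial type of $\lambda$ addresses singular strata of the discriminant and competing limits, but not this issue; you need either the paper's induction or an equivalent device (e.g., rerunning your argument at the level $r'$ of the adjacent lower rank) to cover those components.
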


\begin{proof}
Part \eqref{causaRe} has been shown in \cite{CausaRe},
and part \eqref{partLeeSt} is one of the results contained in \cite{LeeSturmfels}.
Now we apply the idea, given in \cite{LeeSturmfels} and developed in \cite{BS18},
to study 
the boundary $\partial_{\textrm{alg}}(\mathcal{R}_{d,k+i})$ between 
$\mathcal{R}_{d,k+i}$ and $\mathcal{R}_{d,k+i+1}\cup\cdots\cup \mathcal{R}_{d,d}$, where $1\leq i\leq k-2$.

Let $\{f_{\varepsilon}\}_{\varepsilon}$ be a continuous (smooth) family of forms going 
from $\mathcal{R}_{d,k+i}$ to $\mathcal{R}_{d,k+i+1}\cup\cdots\cup\mathcal{R}_{d,d}$ and crossing 
some irreducible component of
the boundary 
$\partial_{\textrm{alg}}(\mathcal{R}_{d,k+i})$ 
at a general
point $f_0=f$.
Thus, we assume that $f_{-\varepsilon}\in\mathcal{R}_{d,k+i}$ and 
$f_\varepsilon\in\mathcal{R}_{d,k+i+1}\cup\cdots\cup \mathcal{R}_{d,d}$ for any small $\varepsilon$ with $\varepsilon>0$.

Let $\Gr(2 i,k+i)$ be the Grassmannian of $2i$-planes in $\PP(D_{k+i})$, and
consider the apolar map
\begin{equation*}
\varPsi_{d,k+i}: \PP^d\dashrightarrow Z_{d,k+i}\subset\Gr(2 i,k+i) \subset \PP^{\binom{k+i+1}{2 i +1}-1},
\end{equation*}
which, since $i\geq 1$, is a birational map  onto its image  $Z_{d,k+i}\subset\PP^{\binom{k+i+1}{2 i +1}-1}$.
The exceptional locus of $\varPsi_{d,k+i}$ is contained in 
the locus of those forms whose annihilator is not generated in generic degrees, 
which has codimension $2$ in $\PP^d$. Therefore we can assume that $\varPsi_{d,k+i}$ 
is a local isomorphism at  $f_{\varepsilon}$ for each $\varepsilon$,
and hence it 
sends bijectively
the family $\{f_\varepsilon\}_{\varepsilon}$ into 
a continuous family $\{\Pi_{\varepsilon}\}_{\varepsilon}$ of apolar $2i$-planes.

From the Apolarity Lemma (Lemma \ref{apolarity}), we obtain that the 
$2i$-plane
$\Pi_{\varepsilon}$, with $\varepsilon<0$, 
contains a real-rooted form 
$h_\varepsilon$, while 
$\Pi_{\varepsilon}$, with $\varepsilon>0$, does not contain any real-rooted form.
The set of real-rooted forms
is a full-dimensional connected  semi-algebraic subset of $\PP^{k+i}$, 
and the Zariski closure of its topological 
boundary is the discriminant hypersurface $\Delta=\Delta_{(2,1^{k+i-2})}$.
Recall also that the singular locus of a coincident root locus 
$\Delta_\lambda$ is given by a union of $\Delta_\mu$, for suitable coarsenings $\mu$ of $\lambda$.
Thus we deduce that the limit 
$h_0=\lim_{\varepsilon\rightarrow 0^{-}} h_{\varepsilon}$ must belong to 
$\Delta $. More precisely, by applying Lemma~\ref{GuilleminPollak},
we obtain that there must exist $a\in\{0,\ldots,2 i\}$ such that 
  $h_0$ is a (smooth) point 
  of a coincident root locus $\Delta'\subseteq\Delta\subset\PP^{k+i}$
  corresponding to a partition $\lambda$ of $k+i$ of length $k+i-a-1$
  and the tangent space 
  $T_{h_0}(\Delta')$
  intersects $\Pi_0\simeq\PP^{2i}$ in a subspace $H$ of dimension at least $2i-a$ passing through~$h_0$.
  This implies that $\Pi_0\in \CH_{2i - a}(\Delta')$, 
  so that 
   $f\in\overline{\varPsi_{d,k+i}^{-1}(\CH_{2i - a}(\Delta'))}$.
  
  Thus we have that each irreducible component
  of the boundary that separates 
  $\mathcal{R}_{d,k+i}$ from $\mathcal{R}_{d,k+i+1}\cup\cdots\cup \mathcal{R}_{d,d}$ is contained in the following union
  \[
   \bigcup_{a=0}^{2 i} \bigcup_{\lambda} \overline{\varPsi_{d,k+i}^{-1}(\CH_{2i - a}(\Delta_{\lambda}))},
  \]
where $\lambda$ runs among all the partitions of $k+i$ 
of length $k+i-a-1$,
 and where moreover 
$\overline{\varPsi_{d,k+i}^{-1}(\CH_{2i - a}(\Delta_{\lambda}))}$ 
is required to have
codimension $1$.
By Corollary~\ref{soluzione-del-problemino}, this last condition is equivalent to the fact that 
$\CH_{2i - a}(\Delta_{\lambda})$ is a hypersurface in $\Gr(2 i,k+i)$,
that is such that 
$2i-a\leq k+i-a-1 - m_1(\lambda)$.
Thus, by applying Corollary~\ref{pippo}, 
we deduce that
  \begin{equation*}
    \bigcup_{a=0}^{2 i} \bigcup_{\lambda}\overline{\varPsi_{d,k+i}^{-1}(\CH_{2i - a}(\Delta_{\lambda}))}
  \subseteq  
    \bigcup_{a=0}^{2 i} \bigcup_{\mu} 
       \overline{\varPsi_{d,k-i+a }^{-1}(\CH_{0}(\Delta_{\mu}))} 
       = \bigcup_{a=0}^{2 i} \bigcup_{\nu} (\Delta_{\nu})^{\vee} = \bigcup_{\rho} (\Delta_{\rho})^{\vee},
   \end{equation*}
where $\mu$ runs among the partitions of $k-i+a$ of length $k+i-a-1$,  
$\nu$ runs among the partitions of $d$ of length $k+i-a-1$ with
parts  $\geq 2$, and 
$\rho$ runs among the partitions of $d$ of length between  $k-1-i$ and  $k-1+i$
with parts $\geq2$.
Of course we must have $a\geq i$ and the length of  
a partition $\rho$ is at most $k-1$.

Let
$$
{\mathcal{S}}_i^d = \bigcup_{\lambda} (\Delta_{\lambda})^{\vee},
$$
 where $\lambda$ runs among the 
partitions of $d$ of length $k-1-i$ and with all parts $\geq 2$.
We have shown that
$$
 \partial(\mathcal{R}_{d,k+i})\setminus \bigcup_{j=1}^{i} \partial(\mathcal{R}_{d,k+i-j}) \subseteq {\mathcal S}_i^d\cup  {\mathcal S}_{i-1}^d \cup \cdots \cup {\mathcal S}_0^d.
$$
Now, by an easy induction on $i$, we obtain
$$ \partial(\mathcal{R}_{d,k+i}) \subseteq {\mathcal S}_i^d\cup  {\mathcal S}_{i-1}^d \cup \cdots \cup {\mathcal S}_0^d ,
$$
and we conclude the proof by taking the Zariski closure.
\end{proof}

  \begin{theorem}\label{casopari}
 If $d = 2 k $ is even,  then
  the algebraic boundaries for degree $d$ binary real forms 
  satisfy the following:
\begin{enumerate}
\item\label{partLeeStEven} If $i=1$ then \[\partial_{\textrm{alg}}(\mathcal{R}_{d,k+i}) 
= (\Delta_{(3^2,2^{k-3})})^\vee \cup (\Delta_{(4,2^{k-2})})^\vee.\]
\item\label{parte2pari} If $2\leq i\leq k-1$ then  
  \[
  \partial_{\textrm{alg}}(\mathcal{R}_{d,k+i}) \subseteq  
\bigcup_{\rho} (\Delta_{\rho})^{\vee} , \]
where 
$\rho$ runs among the partitions of $d$ 
with all parts greater than or equal to $2$
and of length between  $k-i$ and  $k$;
in particular, 
  \begin{align*}
   {\partial_{\textrm{alg}}(\mathcal{R}_{d,k+2})} &\subseteq (\Delta_{(2^{k})})^\vee \cup 
   (\Delta_{(3^2,2^{k-3})})^\vee \cup (\Delta_{(4,2^{k-2})})^\vee 
  \cup (\Delta_{(6,2^{k-3})})^\vee
  \cup (\Delta_{(5,3,2^{k-4})})^\vee \\ & \quad 
  \cup (\Delta_{(4^2,2^{k-4})})^\vee \cup (\Delta_{(4,3^2,2^{k-5})})^\vee
\cup (\Delta_{(3^4,2^{k-6})})^\vee .
\end{align*}
\item\label{causaReEven} If $i = k$, then \[\partial_{\textrm{alg}}(\mathcal{R}_{d,k+i}) = (\Delta_{(d)})^\vee . \]
  \end{enumerate}
\end{theorem}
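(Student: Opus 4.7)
Parts (1) and (3) are citations: part (3) follows from \cite{ComonOttaviani} and \cite{CausaRe}, while part (1) is the even-degree minimum-rank result of \cite{LeeSturmfels}, where the two components are shown to be $(\Delta_{(3^2,2^{k-3})})^\vee$ and $(\Delta_{(4,2^{k-2})})^\vee$. The bulk of the work is in part (2), which will parallel the proof of Theorem~\ref{teoremaOdd} with one essential adjustment: in the even-degree case the locus of forms whose apolar ideal is not generated in generic degrees is a hypersurface (rather than a codimension-$2$ subvariety as for odd $d$). Applying \cite[Corollary~2.6]{LeeSturmfels} to $\lambda=(2^k)$, the building blocks $\Delta_{(d-\lambda_i+2,1^{\lambda_i-2})}$ all collapse to the rational normal curve $\Delta_{(d)}$, so this hypersurface is identified with the $k$-th secant variety $\sigma_k(\Delta_{(d)})=(\Delta_{(2^k)})^\vee$, which is already one of the components listed in the theorem statement.

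Fix a smooth family $\{f_\varepsilon\}_\varepsilon$ with $f_{-\varepsilon}\in\mathcal{R}_{d,k+i}$ and $f_\varepsilon\in\mathcal{R}_{d,k+i+1}\cup\cdots\cup\mathcal{R}_{d,d}$ for small $\varepsilon>0$, crossing a general point $f_0=f$ of an irreducible component of $\partial_{\textrm{alg}}(\mathcal{R}_{d,k+i})$. I split into two cases. If $f_0\in(\Delta_{(2^k)})^\vee$, then, since both are irreducible of codimension one, the component of the boundary through $f_0$ coincides with $(\Delta_{(2^k)})^\vee$, which is included in the asserted union. Otherwise, $\varPsi_{d,k+i}\colon\PP^d\dashrightarrow\Gr(2i-1,k+i)$ is a local isomorphism near $f_\varepsilon$ and sends $\{f_\varepsilon\}$ to a smooth family $\{\Pi_\varepsilon\}$ of apolar $(2i-1)$-planes. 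By Lemma~\ref{apolarity}, $\Pi_{-\varepsilon}$ contains a real-rooted form while $\Pi_{+\varepsilon}$ does not. Applying Lemma~\ref{GuilleminPollak} to the semi-algebraic set of real-rooted forms inside $\PP^{k+i}$ (whose Zariski boundary is the discriminant $\Delta_{(2,1^{k+i-2})}$, with singular locus a union of $\Delta_\mu$ over coarsenings $\mu$) yields, for some $a\in\{0,\dots,2i-1\}$ and some partition $\lambda$ of $k+i$ of length $k+i-a-1$, the relation $\Pi_0\in\CH_{2i-1-a}(\Delta_\lambda)$; equivalently $f\in\overline{\varPsi_{d,k+i}^{-1}(\CH_{2i-1-a}(\Delta_\lambda))}$.

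The codimension-one requirement, via Corollary~\ref{soluzione-del-problemino}, forces $m_1(\lambda)\le k-i$, after which Corollary~\ref{pippo} rewrites the pullback as $\bigcup_\nu(\Delta_\nu)^\vee$ where $\nu$ runs over partitions of $d$ of length $k+i-a-1$ with all parts at least $2$. Since $d=2k$ and each part is $\ge 2$, the length of $\nu$ cannot exceed $k$, forcing $a\ge i-1$; thus $a\in\{i-1,\dots,2i-1\}$ and the length $k+i-a-1$ sweeps the range $\{k-i,\dots,k\}$, matching the combinatorial range in the theorem. The inclusion $\partial_{\textrm{alg}}(\mathcal{R}_{d,k+i})\subseteq\bigcup_\rho(\Delta_\rho)^\vee$ then follows by an induction on $i$ that mirrors the one at the end of Theorem~\ref{teoremaOdd}, and the explicit list for $i=2$ is obtained by enumerating all partitions of $2k$ with parts $\ge 2$ and length in $\{k-2,k-1,k\}$. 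The main delicate point is not the adaptation of the apolar argument itself but the identification of the non-generic-degrees hypersurface with $(\Delta_{(2^k)})^\vee$: a degree match alone is not enough, and one must invoke the Lee-Sturmfels join description to nail down the set-theoretic equality.
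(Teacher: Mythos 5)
Your proposal is correct and follows essentially the same route as the paper: parts (1) and (3) by citation, and part (2) by running the apolar-map/transversality argument of Theorem~\ref{teoremaOdd} with the $(2i-1)$-planes $\varPsi_{d,k+i}(f_\varepsilon)$, then invoking Lemma~\ref{GuilleminPollak} and Corollaries~\ref{soluzione-del-problemino} and \ref{pippo}, with the same bound $a\ge i-1$ giving the stated range of lengths. Your explicit dichotomy on whether the boundary component equals $(\Delta_{(2^k)})^\vee$, and the identification of the non-generic-degrees hypersurface with $(\Delta_{(2^k)})^\vee=\sigma_k(\Delta_{(d)})$ via the join description of \cite{LeeSturmfels}, only make precise what the paper handles implicitly by requiring $f_\varepsilon\notin(\Delta_{(2^k)})^\vee$.
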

  \begin{proof}
   Thanks to \cite{LeeSturmfels} and \cite{CausaRe} we have only to show part \eqref{parte2pari}.
   The proof for this part is quite similar to that of Theorem~\ref{teoremaOdd}
   and we now sketch it.
   Consider
    a continuous  (smooth) family of degree $d$  forms $\{f_{\varepsilon}\}_{\varepsilon}$ such that 
    $f_{-\varepsilon}\in\mathcal{R}_{d,k+i}$ and 
$f_\varepsilon\in\mathcal{R}_{d,k+i+1}\cup\cdots\cup \mathcal{R}_{d,d}$ for any small $\varepsilon$ with $\varepsilon>0$,
and where we can also require that $f_{\varepsilon}\notin (\Delta_{2^k})^{\vee}$.
The birational map 
\begin{equation*}
\varPsi_{d,k+i}: \PP^d\dashrightarrow Z_{d,k+i}\subset\Gr(2 i-1,k+i) \subset \PP^{\binom{k+i+1}{2 i}-1} 
\end{equation*}
has exceptional locus contained in the hypersurface 
$(\Delta_{2^k})^{\vee}$, so it
 sends bijectively the family $\{f_\varepsilon\}_{\varepsilon}$ into 
a continuous family of apolar $(2i-1)$-planes $\{\Pi_{\varepsilon}\}_{\varepsilon}$.  From Lemma~\ref{apolarity} and Lemma~\ref{GuilleminPollak}, we deduce that $f=f_0$ must belong to the following union 
   \[
   \bigcup_{a=0}^{2 i-1} \bigcup_{\lambda} \overline{\varPsi_{d,k+i}^{-1}(\CH_{2i - a -1}(\Delta_{\lambda}))},
  \]
where $\lambda$ runs among all the partitions of $k+i$ 
of length $k+i-a-1$
 and 
{such that 
$\overline{\varPsi_{d,k+i}^{-1}(\CH_{2i - a -1}(\Delta_{\lambda}))}$ has codimension $1$.
Now we can conclude by applying 
Corollaries~\ref{soluzione-del-problemino} and \ref{pippo},} as in the proof of Theorem~\ref{teoremaOdd}.
  \end{proof}
 
  \begin{remark}\label{quasi-finito-1}
   If $d=2k\leq 8$, then the hypersurface $(\Delta_{(2^k)})^{\vee}$ 
   does not belong to any boundary; see \cite{LeeSturmfels,BS18}.
   We expect that the same holds in general.

Furthermore,
    notice that parts (2) of Theorems~\ref{teoremaOdd} and \ref{casopari} are not sharp. 
    Indeed for low degrees, $d\le 8$, we know by \cite{BS18} that the formula
 \begin{equation}\label{formula-sharp}\partial_{\textrm{alg}}(\mathcal{R}_{d,k+i}) \subseteq \bigcup_{\rho} (\Delta_{\rho})^{\vee} \end{equation}
    holds with equality when we take 
    only partitions
    {with length $\rho\in\{k-i-1, k-i\}$} if $d$ is odd, and 
    with length $\rho\in\{k-i, k-i+1\}$ if $d$ is even.
    We expect that the same happens in general. 
        This also would imply that an algebraic boundary exists only between a region $\mathcal{R}_{d,r}$ and $\mathcal{R}_{d,r+1}$ (or $\mathcal{R}_{d,r-1}$).
\end{remark}

\begin{table}
\footnotesize 
\parbox{.45\linewidth}{
\tabcolsep=2.2pt 
\begin{tabular}{c|c|cccccccccccccc}
$d$ & {$k\diagdown i$} & 0& 1& 2& 3& 4& 5& 6& 7& 8& 9& 10& 11 \\
\hline 
5& 3&1&1& & & & & & & & & &  \\
7& 4&1&2&1& & & & & & & & &  \\
9& 5&1&3&3&1& & & & & & & &  \\
11& 6&1&3&5&4&1& & & & & & &  \\
13& 7&1&3&6&8&5&1& & & & & &  \\
15& 8&1&3&7&11&12&6&1& & & & &  \\
17& 9&1&3&7&13&18&16&7&1& & & &  \\
19& 10&1&3&7&14&23&27&21&8&1& & &  \\
21& 11&1&3&7&15&26&37&39&27&9&1& &  \\
23& 12&1&3&7&15&28&44&57&54&33&10&1&  \\
25& 13&1&3&7&15&29&49&71&84&72&40&11&1 \\
\end{tabular}
\caption{
\footnotesize Number of the partitions of $d=2k-1$ with all parts 
$\geq 2$
and 
of length  $k-i-1$ with $0\leq i\leq k-2$ (Theorem~\ref{teoremaOdd}).}
\label{tabella-1}
}
\parbox{0.5\linewidth}{
\centering 
\tabcolsep=2.2pt 
\begin{tabular}{c|c|ccccccccccccccc}
$d$ & {$k\diagdown i$} & 0 & 1& 2& 3& 4& 5& 6& 7& 8& 9& 10& 11& 12 \\
\hline 
6& 3&1&2&1& & & & & & & & & &  \\
8& 4&1&2&3&1& & & & & & & & &  \\
10& 5&1&2&4&4&1& & & & & & & &  \\
12& 6&1&2&5&7&5&1& & & & & & &  \\
14& 7&1&2&5&9&10&6&1& & & & & &  \\
16& 8&1&2&5&10&15&14&7&1& & & & &  \\
18& 9&1&2&5&11&18&23&19&8&1& & & &  \\
20& 10&1&2&5&11&20&30&34&24&9&1& & &  \\
22& 11&1&2&5&11&21&35&47&47&30&10&1& &  \\
24& 12&1&2&5&11&22&38&58&70&64&37&11&1&  \\
26& 13&1&2&5&11&22&40&65&90&101&84&44&12&1\\
\end{tabular}
\caption{\footnotesize Number of the partitions of $d=2k$ with all parts 
$\geq2$
and 
of length  $k-i$ with $0\leq i\leq k-1$ (Theorem~\ref{casopari}).}
\label{tabella-2}
}
\end{table}

In Tables~\ref{tabella-1} and \ref{tabella-2}, we report the number of partitions of given length of some integers,
in accordance to the formulas of Theorems~\ref{teoremaOdd} and \ref{casopari}.
The sum of the elements of the $k$-th row up to position $i$
gives an upper bound for the number of components 
of the algebraic boundary $\partial_{\textrm{alg}}(\mathcal{R}_{d,k+i})$, where $d=2k-1$ if $d$ is odd, and $d=2k$ if $d$ is even.
However, by taking into account our expectations described 
 in Remark \ref{quasi-finito-1},
a finer upper bound for the number 
of components of 
$\partial_{\textrm{alg}}(\mathcal{R}_{d,k+i})$ 
would be given just by the sum of the two numbers in the $k$-th row corresponding to the position $i$ and $i-1$.


\providecommand{\bysame}{\leavevmode\hbox to3em{\hrulefill}\thinspace}
\providecommand{\MR}{\relax\ifhmode\unskip\space\fi MR }
\providecommand{\MRhref}[2]{%
  \href{http://www.ams.org/mathscinet-getitem?mr=#1}{#2}
}
\providecommand{\href}[2]{#2}

\end{document}